\documentclass[12pt]{amsart}

\usepackage[colorlinks=true]{hyperref}
\usepackage[all]{xy}
\usepackage{amssymb}
\usepackage{aliascnt}
\usepackage{verbatim}
\usepackage{tikz}
\usetikzlibrary{decorations.markings}
\usepackage{amsmath}
\usepackage{amsthm}

\textwidth=16.5cm
\oddsidemargin=0cm
\evensidemargin=0cm
\textheight=22cm
\topmargin=0cm

\binoppenalty=10000
\relpenalty=10000

\newcommand{\A}{\mathbf{A}}
\newcommand{\C}{\mathbf{C}}
\newcommand{\G}{\mathbf{G}}
\newcommand{\bE}{\mathbb{E}}
\newcommand{\bL}{\mathbb{L}}

\newcommand{\bT}{\mathbb{T}}
\newcommand{\Z}{\mathbf{Z}}

\newcommand{\cA}{\mathcal{A}}

\newcommand{\cF}{\mathcal{F}}
\newcommand{\cK}{\mathcal{K}}

\newcommand{\cO}{\mathcal{O}}
\newcommand{\cT}{\mathcal{T}}

\newcommand{\B}{\mathrm{B}}

\newcommand{\ddr}{\mathrm{d}_\mathrm{dR}}
\renewcommand{\d}{\mathrm{d}}
\newcommand{\D}{\mathrm{D}}
\newcommand{\GL}{\mathrm{GL}}
\newcommand{\mH}{\mathrm{H}}

\newcommand{\K}{\mathrm{K}}

\newcommand{\Bordfr}{\mathrm{Bord}^{\mathrm{fr}}}
\newcommand{\Bordor}{\mathrm{Bord}^{\mathrm{or}}}
\newcommand{\cdga}{\mathrm{cdga}}
\newcommand{\Cob}{\mathrm{Cob}}
\newcommand{\Corr}{\mathrm{Corr}}
\newcommand{\CorrK}{\mathrm{Corr}_{/\K}}
\newcommand{\CorrO}{\mathrm{Corr}_{/\Omega^{2, cl}[n]}}
\newcommand{\CorrOO}{\mathrm{Corr}_{/\Omega^{2, cl}[2]}}
\newcommand{\dASt}{\mathrm{dASt}}
\newcommand{\dSt}{\mathrm{dSt}}
\newcommand{\LC}{\mathrm{LagrCorr}}
\newcommand{\Pic}{\mathrm{Pic}}
\newcommand{\QC}{\mathrm{QCoh}}
\newcommand{\SSet}{\mathrm{SSet}}
\newcommand{\Symp}{\mathrm{Symp}}

\DeclareMathOperator{\Ad}{Ad}

\DeclareMathOperator{\ch}{ch}

\DeclareMathOperator{\End}{End}
\DeclareMathOperator{\ev}{ev}
\DeclareMathOperator{\Fun}{Fun}
\DeclareMathOperator{\Hom}{Hom}
\DeclareMathOperator{\id}{id}
\DeclareMathOperator{\im}{im}
\DeclareMathOperator{\Ker}{Ker}
\DeclareMathOperator{\Loc}{Loc}
\DeclareMathOperator{\Map}{Map}
\DeclareMathOperator{\pt}{pt}
\DeclareMathOperator{\reg}{reg}
\DeclareMathOperator{\Spec}{Spec}
\DeclareMathOperator{\Sym}{Sym}
\DeclareMathOperator{\Tot}{Tot}

\newtheorem{thm}{Theorem}[section]
\newaliascnt{lm}{thm}
\newaliascnt{prop}{thm}
\newaliascnt{cor}{thm}
\newtheorem{lm}[lm]{Lemma}

\aliascntresetthe{lm}
\aliascntresetthe{prop}
\aliascntresetthe{cor}

\theoremstyle{definition}
\newtheorem*{defn}{Definition}

\begin{document}
\title{Quasi-Hamiltonian reduction via classical Chern--Simons theory}
\author{Pavel Safronov}
\address{The University of Texas at Austin, Mathematics Department, 2515 Speedway Stop C1200, Austin, Texas, USA 78712}
\begin{abstract}
This paper puts the theory of quasi-Hamiltonian reduction in the framework of shifted symplectic structures developed by Pantev, To\"{e}n, Vaqui\'{e} and Vezzosi. We compute the symplectic structures on mapping stacks and show how the AKSZ topological field theory defined by Calaque allows one to neatly package the constructions used in quasi-Hamiltonian reduction. Finally, we explain how a prequantization of character stacks can be obtained purely locally.
\end{abstract}
\maketitle

\setcounter{section}{-1}
\section{Introduction}
\subsection{}

This paper is an attempt to interpret computations of Alekseev, Malkin and Meinrenken \cite{AMM} in the framework of shifted symplectic structures \cite{PTVV}.

Symplectic structures appeared as natural structures one encounters on phase spaces of classical mechanical systems. Classical mechanics is a one-dimensional classical field theory and when one goes up in the dimension shifted, or derived, symplectic structures appear. That is, given an $n$-dimensional classical field theory, the phase space attached to a $d$-dimensional closed manifold carries an $(n-d-1)$-shifted symplectic structure. For instance, if $d = n-1$ one gets ordinary symplectic structures and for $d=n$, i.e. in the top dimension, one encounters $(-1)$-shifted symplectic spaces. These spaces can be more explicitly described as critical loci of action functionals.

An $n$-shifted symplectic structure on a stack $X$ is an isomorphism $\bT_X\stackrel{\sim}\rightarrow \bL_X[n]$ between the tangent complex and the shifted cotangent complex together with certain closedness conditions. Symplectic structures on stacks put severe restrictions on the geometry: for instance, a 0-shifted symplectic derived scheme is automatically smooth. Moreover, $n$-shifted symplectic structures for odd $n$ exist only on 0-dimensional stacks.

One can also make sense of Lagrangians $L\rightarrow X$: these are morphisms together with an identification $N_{L/X}\stackrel{\sim}\rightarrow \bL_L[n]$ between the normal bundle and the shifted cotangent complex; in the derived setting being a Lagrangian is an additional structure on a morphism. Given two Lagrangians $L_1, L_2\rightarrow X$ in an $n$-shifted symplectic stack, their intersection $L_1\times_X L_2$ carries a natural $(n-1)$-shifted symplectic structure; for instance, the critical locus of a function $S\colon M\rightarrow \A^1$ is an intersection of the graph of $\ddr S$ and the zero section inside of $T^*X$, hence it carries a $(-1)$-shifted symplectic structure.

\subsection{}

Ordinary symplectic reduction starts with a symplectic space $M$ with a $G$-action and a $G$-equivariant moment map $\mu\colon M\rightarrow \mathfrak{g}^*$ satisfying certain conditions. From this data one constructs the reduced space $M_{red} = [\mu^{-1}(0) / G]$, which again carries a symplectic structure. Note that we can also write the reduced space $M_{red}$ as $[M/G] \times_{[\mathfrak{g}^*/G]} [\pt/G]$.
 
We now come to the interpretation of the symplectic reduction in terms of shifted symplectic structures. It turns out $[\mathfrak{g}^*/G]$ carries a natural 1-shifted symplectic structure (for instance, coming from its identification with the shifted cotangent bundle $T^*[1]\B G$). A $G$-equivariant map $\mu\colon M\rightarrow\mathfrak{g}^*$ induces a Lagrangian map $[M/G]\rightarrow [\mathfrak{g}^*/G]$ if $M$ is symplectic and the usual moment map equations are satisfied. Then $M_{red}=[M/G]\times_{[\mathfrak{g}^*/G]} [\pt/G]$ is simply an intersection of two Lagrangians in $[\mathfrak{g}^*/G]$ and thus it possesses a symplectic structure \cite{Ca}.

\subsection{Quasi-Hamiltonian reduction}

Quasi-Hamiltonian reduction replaces moment maps $\mu\colon M\rightarrow \mathfrak{g}^*$ by maps $\mu\colon M\rightarrow G$. As before, $[G/G]$ carries a 1-shifted symplectic structure depending on a nondegenerate $G$-invariant quadratic form on $\mathfrak{g}$. A map $\mu\colon M\rightarrow G$ induces a Lagrangian morphism $[M/G]\rightarrow [G/G]$ if we have a $G$-equivariant two-form on $M$ together with certain conditions which imply that $M$ is a quasi-Hamiltonian space in the sense of \cite{AMM}. The reduced space \[M_{red}=[M/G]\times_{[G/G]}[\pt/G]\] is interpreted as an intersection of Lagrangians in $[G/G]$ as before.

The 1-shifted symplectic structure on $[G/G]$ has been studied previously. For instance, see \cite{Xu} where the adjoint action groupoid $G\times G\rightrightarrows G$ was shown to be quasi-symplectic, a notion closely related to that of a 1-shifted symplectic structure. More precisely, the classifying stack of a (quasi-)symplectic groupoid is a 1-shifted symplectic stack. The relation between the symplectic structure on $[G/G]$ and quasi-Hamiltonian reduction was also known before. One of the goals of this paper is to show that the 1-shifted symplectic structure on $[G/G]$ is transgressed from the 2-shifted symplectic structure on $\B G$.

As $[G/G]=\Map_{\dSt}(S^1_{\B}, \B G)$, one can try to find interpretations of the constructions appearing in the literature on quasi-Hamiltonian reduction in terms of the AKSZ topological field theory attached to $\B G$. Let us explain what it is. Let $\Bordor_n$ be the symmetric monoidal $(\infty, n)$-category of oriented bordisms. $\LC_n$ is the symmetric monoidal $(\infty, n)$-category which has
\begin{itemize}
\item objects: $(n-1)$-shifted symplectic stacks,
\item 1-morphisms: Lagrangian correspondences $X\leftarrow L\rightarrow Y$ between $(n-1)$-shifted symplectic stacks $X$, $Y$,
\item 2-morphisms: correspondences $L_1\leftarrow C\rightarrow L_2$ between Lagrangian correspondences, where $C\rightarrow L_1\times_{X\times Y} L_2$ is Lagrangian
\end{itemize}
and so on. Then the theorems in \cite{Ca} should give a splitting of the natural map \[\Fun^{\otimes}(\Bordor_n, \LC_n)\stackrel{\sim}\rightarrow \LC_n^\sim\] from the $\infty$-groupoid of symmetric monoidal functors $Z\colon \Bordor_n\rightarrow\LC_n$; the map is simply $Z\mapsto Z(\pt)$. In a sense, this can be viewed as an explicit proof of the cobordism hypothesis for the category of Lagrangian correspondences. Given an $(n-1)$-shifted symplectic stack $X$ and a manifold $M$ one assigns $Z_X(M)=\Map_{\dSt}(M_{\B}, X)$, the derived mapping stack from the constant stack associated to $M$, to $X$. As of now, only the 1-categorical truncations of $\LC_n$ have been constructed; these turn out to be enough for our purposes.

We are interested in the AKSZ field theory for $X=\B G$, a 2-shifted symplectic stack. The corresponding 3-dimensional topological field theory is the classical version of the Chern--Simons theory. Evaluation on a pair of pants gives a Lagrangian correspondence \[[G/G]\leftarrow [(G\times G)/G]\rightarrow [G/G]\times [G/G].\] Given two Lagrangian morphisms $L_1, L_2\rightarrow [G/G]$ we define their fusion to be an integral transform along this correspondence. For quasi-Hamiltonian spaces this coincides with the fusion procedure described in \cite{AMM}. Given a compact oriented surface $M$, let $M'$ be the same surface with a disk removed. Then $Z_{\B G}(M)=Z_{\B G}(M')\times_{Z_{\B G}(S^1)} Z_{\B G}(D)$. As $Z_{\B G}(D) = [\pt/G]$ and $Z_{\B G}(S^1) = [G/G]$, this gives a construction of $Z_{\B G}(M)=\Loc_G(M)$, the character stack of $M$, as a quasi-Hamiltonian reduction of $Z_{\B G}(M') = [G^{\times 2n}/G]$. The existence of the AKSZ field theory implies that the symplectic structure obtained from quasi-Hamiltonian reduction coincides with the one obtained by integrating the 2-shifted symplectic structure on $\B G$.

We should mention that the original application of the AKSZ field theory was for $X=\B\mathfrak{g}$: see the original paper \cite{AKSZ} and the paper \cite{GG} which discusses the symplectic structure on $\B\mathfrak{g}$ and the relation to ordinary Hamiltonian reduction. The AKSZ field theory for $\B\mathfrak{g}$ recovers only the formal completion of the trivial local system in the character stack $\Loc_G(M)$ (in physical terms, one is considering a perturbative Chern--Simons theory); our analysis of the symplectic structure on $\B G$ allows one to consider global questions (non-perturbative effects).

\subsection{Derived algebraic geometry}

The moduli space of isomorphism classes of $G$-local systems $\pi_0(\Loc_G(M))$ is not local, i.e. it does not satisfy descent: as any manifold can be covered by contractible open sets, every local section of $\pi_0(\Loc_G(M))$ is trivial, hence its sheafification is just a point. Therefore, we have to use the language of stacks for $\Loc_G(-)$ to define a local field theory.

The underived stack $t_0(\Loc_G(S^2))$ is isomorphic to $\B G$ since the 2-sphere is simply-connected. The stack $\B G$ does not admit a 0-shifted symplectic structure, so we are inevitably led to the land of derived algebraic geometry, where the derived stack $\Loc_G(S^2)$ does admit a 0-shifted symplectic structure.

Let us remind some basic definitions we will be using in the paper. The reader is invited to consult \cite{HAG} as well as the reviews \cite{Ca14}, \cite{TV1}, \cite{ToHDS}, \cite{ToDAG} for an introduction to derived algebraic geometry.

Let $k$ be a field of characteristic zero. The category of derived affine schemes is opposite to the category $\cdga^{\leq 0}$ of commutative differential graded algebras over $k$ concentrated in non-positive degrees. Derived prestacks are simply functors $\cdga^{\leq 0}\rightarrow \SSet$ to the category of simplicial sets. The category of derived stacks $\dSt$ is defined to be the full subcategory of prestacks satisfying \'{e}tale descent.

For a derived prestack $X$ we define the symmetric monoidal dg-category of quasi-coherent sheaves to be the limit
\[\QC(X) = \lim_{\Spec A\rightarrow X} A-\mathrm{mod},\]
where $A-\mathrm{mod}$ is the dg-category of cofibrant dg-modules over $A\in\cdga^{\leq 0}$. Given a quasi-coherent sheaf $\cF\in \QC(X)$ we denote by $\Gamma(X, \cF):=\Hom_{\QC(X)}(\cO_X, \cF)$, the complex of morphisms from the structure sheaf $\cO_X$ to $\cF$, the complex of global sections of $\cF$ over $X$.

By a stack in the paper we always mean a derived Artin stack locally of finite presentation. All such stacks have a perfect cotangent complex $\bL_X\in \QC(X)$. In this case the tangent complex $\bT_X\in \QC(X)$ is defined to be the dual sheaf $\bT_X=\mathcal{H}\mathrm{om}_{\QC(X)}(\bL_X, \cO_X)$. Note, that the stacks we consider in the paper are quotient stacks of derived affine schemes by an action of smooth affine group schemes. Hence, the stacks we get are derived Artin 1-stacks, which are 0-geometric.

Given a simplicial set $M$, we denote by $M_\B$ the constant stack associated to $M$. When we want to consider $M$ as a prestack we will simply write $M$ or $M_\bullet$.

Given a derived scheme $X$ with an action of an algebraic group $G$ we denote by $X/G$ the simplicial derived scheme which is the nerve of the action groupoid $X\times G\rightrightarrows X$. We denote by $[X/G]$ the associated derived stack. For instance, the classifying stack is
\[\B G = [\pt/G].\]

\subsection{Structure of the paper}

The paper is organized as follows. In section 1 we define the notion of shifted symplectic structures and Lagrangian morphisms. We show that one can compose Lagrangian correspondences using pullbacks, which gives rise to the composition in the category $\LC_n$. Section 2 is devoted to explicit calculations showing how ordinary symplectic reduction and quasi-Hamiltonian reduction fit into the framework of shifted symplectic structures. In section 3 we interpret the 1-shifted symplectic structure on $G$ in terms of the multiplicative $\Omega^{2, cl}$-torsor on $G$. Section 4 provides a dictionary between the operations in classical Chern-Simons theory and quasi-Hamiltonian reduction. In Section 5 we provide computations for the transgressed symplectic structures in particular showing that the symplectic structure on $[G/G]$ is the one obtained previously. We end with a discussion of prequantizations of character stacks; in particular, we show how one would obtain a prequantization of the complex-analytic character stack given a good theory of derived complex-analytic stacks.

\subsection{Acknowledgements}

The author would like to thank David Ben-Zvi for various conversations related to the content of the paper. The author would also like to thank Andrew Blumberg for explaining the basics of algebraic $\K$-theory and cyclic homology.

\section{Derived symplectic geometry}

\subsection{Symplectic structures}

Let us remind basic notions of differential forms in derived algebraic geometry \cite{PTVV}.

Let $X=\Spec A$ be a derived affine scheme for $A\in\cdga^{\leq 0}$ a non-positively graded commutative differential graded algebra.  Recall that we have the cotangent complex $\bL_A$, which is an $A$-module, and the complex of differential forms $\Omega(X):=\Sym_A (\bL_A[1])$. It has a $\G_m$ action given by scaling the cotangent complex and we denote by $\Omega^p(X)[p]$ the weight $p$ piece. Define the complex $\Omega^p(X, n)$ of $p$-forms of degree $n$ to be $\Omega^p(X)[n]$.

The space $\mH^0(\Omega^p(X, n))$ of $p$-forms of degree $n$ is an algebraic analog of the space $\mH^{p,n}(X)$ of $(p,n)$-forms in complex analytic geometry.

The de Rham differential is a morphism $\ddr\colon \Omega(X)\rightarrow \Omega(X)$ of degree $-1$ and weight $1$, which squares to zero. We define the complex of closed forms to be
\[\Omega^{cl}(X):=(\Sym_A(\bL_A[1])\otimes_k k[[u]], d + u\ddr),\]
where $d$ is the differential on $\Omega(X)$ and $u$ has degree $2$ and weight $-1$. Let $\Omega^{p,cl}(X)[p]$ be the weight $p$ piece of $\Omega^{cl}(X)$. The complex $\Omega^{p, cl}(X, n)$ of closed $p$-forms of degree $n$ is the weight $p$ piece of $\Omega^{cl}(X)[n-p]$. We have a map $\Omega^{p, cl}(X, n)\rightarrow \Omega^p(X, n)$ given by evaluation at $u=0$.

Geometrically, closed forms can be interpreted as $S^1$-equivariant functions on the free loop space \cite{TV2}, \cite{BZN}. This explains the action of $k[[u]]\cong\cO(\B S^1)$ on the complex of closed forms.

Explicitly, an element $\omega \in \mH^0(\Omega^{p, cl}(X, n))$ is a collection of differential forms $\omega_i$ for $i=0, 1, ...$, such that $\omega_i$ has weight $p+i$ and degree $n-2i$ and the following equations are satisfied:
\begin{align*}
\d\omega_0 &= 0\\
\d\omega_{i+1} + \ddr\omega_i &= 0.
\end{align*}

In other words, $\omega_{i+1}$ expresses closedness of the form $\omega_i$.

Both prestacks $\Omega^p(-, n)$ and $\Omega^{p, cl}(-, n)$ satisfy \'{e}tale descent, so we can define the complexes of forms for a general derived stack as mapping stacks
\[\Omega^p(X, n)=\Map_{\dSt}(X, \Omega^p(-, n)),\quad \Omega^{p, cl}(X, n) = \Map_{\dSt}(X, \Omega^{p, cl}(-, n)).\] For an Artin stack the complex of $p$-forms $\Omega^p(X, n)$ is the space of sections
\[\Omega^p(X, n) \cong \Gamma(X, \Sym^p_{\cO_X}(\bL_X[1])[n-p]).\]
Note that by definition $\Omega^p(-, n)$ and $\Omega^{p, cl}(-, n)$ satisfy descent as they send any colimits to limits.

A two-form $\omega\in \Omega^2(X, n)$ defines a morphism $\bT_X\rightarrow \bL_X[n]$.

\begin{defn}
A two-form $\omega\in\Omega^2(X, n)$ is \textit{nondegenerate} if $\bT_X\rightarrow \bL_X[n]$ is an isomorphism.
\end{defn}

We denote $\cA^{nd}(X, n)\subset |\Omega^2(X, n)|$ the subspace of nondegenerate forms, where $|\Omega^2(X, n)|$ is the simplicial set corresponding to the complex $\Omega^2(X, n)$ under the Dold-Kan correspondence (as the complex $\Omega^2(X, n)$ is not connective in general, we consider its truncation $\tau_{\leq 0}$). We define the space $\Symp(X, n)$ of $n$-shifted symplectic forms to be the pullback
\[
\xymatrix{
\Symp(X, n) \ar[r] \ar[d] & \cA^{nd}(X, n) \ar[d] \\
|\Omega^{2, cl}(X, n)|\ar[r] & |\Omega^2(X, n)|.
}
\]

\subsubsection{Example}

The main example of a symplectic stack relevant for this paper is the classifying stack $X=\B G$ of an affine algebraic group $G$. See \cite[Section 3.4]{TV1} for a definition of $G$-torsors over derived affine schemes. The category of quasi-coherent sheaves $\QC(\B G)$ is naturally identified with the category of comodules over $\cO(G)$. The cotangent complex of $\B G$ is $\bL_{\B G}\cong\mathfrak{g}^*[-1]\in\QC(\B G)$, where $\mathfrak{g}^*$ is the coadjoint representation of $G$. If $G$ is reductive, the functor of $G$-invariants is exact, so $\Omega^2(\B G)$ is concentrated in degree 2 and we have $\mH^0(\Omega^2(\B G, 2))\cong\Sym^2(\mathfrak{g}^*)^G$. One similarly has $\mH^0(\Omega^{2, cl}(\B G, 2))\cong \Sym^2(\mathfrak{g}^*)^G$ since $\ddr=0$. A class $\omega\in\Sym^2(\mathfrak{g}^*)^G$ is nondegenerate if the induced $G$-equivariant map $\mathfrak{g}\rightarrow\mathfrak{g}^*$ is an isomorphism.

\subsection{Lagrangian structures}

An $n$-shifted symplectic form $\omega\in \Symp(X, n)$ can be viewed as an element of $\mH^n(X, \bigwedge^2 \bL_X)$. For example, 1-shifted symplectic structures can be thought of as defining torsors over $\bigwedge^2\bL_X$ together with a trivialization of its de Rham differential and higher closedness conditions (which are void if $\bL_X$ is concentrated in nonnegative degrees). We will take up this point of view in the future sections.

Let $(X, \omega)$ be an $n$-shifted symplectic stack with $\omega\in\Omega^{2, cl}(X, n)$.

\begin{defn}
An \textit{isotropic structure} on $f\colon L\rightarrow X$ is a homotopy from $f^*\omega$ to $0$ in $\Omega^{2, cl}(L, n)$.
\end{defn}

In other words, it is an element $h\in \Omega^{2, cl}(L, n-1)$, such that $(d+u\ddr)h = f^* \omega$.

Explicitly, we have a collection of differential forms $h_i$ satisfying the conditions
\begin{align*}
\d h_0 &= f^*\omega_0 \\
\d h_{i+1} + \ddr h_i &= f^*\omega_{i+1}.
\end{align*}

The form $h_0$ defines a map $\bT_L\rightarrow \bL_L[n-1]$, which is not a chain map in general since $h_0$ is not closed. Consider instead the relative tangent bundle
\[\bT_f = f^*\bT_X[-1]\oplus \bT_L\]
with the differential given by the map $\bT_L\rightarrow f^*\bT_X$. We have a chain map $\bT_f\rightarrow\bL_L[n-1]$ defined to be $f^*\omega_0$ on the first summand and $h_0$ on the second summand.

\begin{defn}
An isotropic structure $f\colon L\rightarrow X$ is \textit{Lagrangian} if  $\bT_f\rightarrow \bL_L[n-1]$ is an isomorphism.
\end{defn}

Here is a way to unpack this definition (see \cite{Ca}). An isotropic structure on $f\colon L\rightarrow X$ is a commutativity data of the diagram
\[
\xymatrix{
\bT_L \ar[r] \ar[d] & 0 \ar[d] \\
f^*\bT_X \ar[r] & \bL_L[n]
}
\]

The isotropic structure is Lagrangian if the diagram is a pullback. In other words,
\[\bT_L\rightarrow f^*\bT_X\rightarrow \bL_L[n]\]
is an exact sequence.

\begin{thm}[\cite{PTVV}]
Let $(X, \omega)$ be an $n$-shifted symplectic stack together with two Lagrangians $L_1\rightarrow X$ and $L_2\rightarrow X$. Then their intersection $L_1\times_X L_2$ carries a natural $(n-1)$-shifted symplectic structure.
\label{thm:ptvvsympl}
\end{thm}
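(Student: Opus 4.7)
The plan is to construct the $(n-1)$-shifted symplectic form on $L := L_1 \times_X L_2$ in two steps: first produce a closed 2-form of degree $n-1$ as the difference of the two pulled-back isotropic homotopies, then verify nondegeneracy by combining the two Lagrangian fiber sequences with the symplectic equivalence on $X$.

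For the construction of the form, write $p_i \colon L \to L_i$ for the two projections and let $f \colon L \to X$ denote the common composition $f \simeq f_1 \circ p_1 \simeq f_2 \circ p_2$ supplied by the pullback square. Each isotropic structure $h_i \in \Omega^{2, cl}(L_i, n-1)$ satisfies $(\d + u \ddr) h_i = f_i^* \omega$, so $p_1^* h_1$ and $p_2^* h_2$ are two trivializations of the same element $f^* \omega \in \Omega^{2, cl}(L, n)$. Their difference
\[\eta := p_1^* h_1 - p_2^* h_2 \in \Omega^{2, cl}(L, n-1)\]
satisfies $(\d + u \ddr) \eta = 0$ and is the candidate closed 2-form of degree $n-1$ on $L$.

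For nondegeneracy, I would verify that the induced map $\bT_L \to \bL_L[n-1]$ is an equivalence by combining the pullback formula for the tangent complex with the Lagrangian and symplectic conditions. The tangent complex sits in the fiber sequence
\[\bT_L \to p_1^* \bT_{L_1} \oplus p_2^* \bT_{L_2} \to f^* \bT_X,\]
while each Lagrangian $f_i$ provides a fiber sequence $p_i^* \bT_{L_i} \to f^* \bT_X \to p_i^* \bL_{L_i}[n]$. Using $\omega \colon f^* \bT_X \xrightarrow{\sim} f^* \bL_X[n]$ to rewrite the middle term and passing to duals, the Lagrangian sequences become $p_i^* \bT_{L_i}[-n] \to f^* \bL_X \to p_i^* \bL_{L_i}$. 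These assemble into a commutative diagram whose total fiber computes both $\bT_L$ (via the tangent pullback) and $\bL_L[n-1]$ (via the cotangent cofiber sequence $f^* \bL_X \to p_1^* \bL_{L_1} \oplus p_2^* \bL_{L_2} \to \bL_L$ defining $\bL_L$), yielding an abstract equivalence $\bT_L \simeq \bL_L[n-1]$. Chasing the homotopies $h_i$ through the identifications shows that $\eta$ realizes this equivalence.

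The main obstacle is the bookkeeping in the last step: verifying that the abstract equivalence $\bT_L \simeq \bL_L[n-1]$ produced by splicing the Lagrangian fiber sequences coincides with the map induced by $\eta$ itself. This amounts to a diagram chase tracking how the homotopies $h_i$ interact with $\omega$ and with the pullback-square homotopy $f_1 \circ p_1 \simeq f_2 \circ p_2$, which is tedious but requires no new input; the algebraic construction of $\eta$ and the verification of closedness in the first step are essentially immediate.
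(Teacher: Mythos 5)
Your proposal is correct and follows essentially the same route as the paper: the paper proves the more general statement that Lagrangian correspondences compose (\autoref{thm:lagrcomp}) and recovers this theorem by taking the outer symplectic stacks to be points, but the two key steps --- producing the closed form as the (signed) sum of the pulled-back isotropic homotopies $h_1$, $h_2$, and establishing nondegeneracy by splicing the two Lagrangian fiber sequences with the identification $\bT_X\cong\bL_X[n]$ and the pullback/pushout descriptions of $\bT_L$ and $\bL_L$ --- are exactly the ones you describe. The ``bookkeeping'' you defer is likewise handled only schematically in the paper's diagram manipulations, so nothing essential is missing.
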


Let us prove a generalization of this theorem, which can also be found in \cite[Theorem 4.4]{Ca}. Let $X$ and $Y$ be $n$-shifted symplectic stacks.

\begin{defn}
A \textit{Lagrangian correspondence} is a correspondence
\[
\xymatrix{
& L \ar[dl] \ar[dr] & \\
X & & Y
}
\]
together with a Lagrangian structure on the map $L\rightarrow X\times \overline{Y}$.
\end{defn}

Here $\overline{Y}$ is $Y$ with the opposite symplectic structure.

The following theorem allows us to compose Lagrangian correspondences, which will be used in \autoref{sect:AKSZ} to describe the AKSZ topological field theory.

\begin{thm}
Let $(X,\omega_X)$, $(Y, \omega_Y)$ and $(Z, \omega_Z)$ be $n$-shifted symplectic stacks and
\[
\xymatrix{
& L_1\ar_{p_X}[dl] \ar^{p^1_Y}[dr] & & L_2\ar_{p^2_Y}[dl] \ar^{p_Z}[dr] & \\
X & & Y & & Z
}
\]
are Lagrangian correspondences. Then the pullback $L_1\times_Y L_2$ is a Lagrangian correspondence between $Z$ and $X$.
\label{thm:lagrcomp}
\end{thm}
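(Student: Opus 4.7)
The plan is to equip the composite $M := L_1 \times_Y L_2$ with a Lagrangian structure over the $n$-shifted symplectic stack $X \times \overline{Z}$, in two steps: first assembling the isotropic data, and then verifying nondegeneracy.

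For the isotropic structure, the Lagrangian conditions on $L_1$ and $L_2$ provide homotopies $h_i \in \Omega^{2, cl}(L_i, n-1)$ with $(d+u\ddr)h_1 = p_X^*\omega_X - (p^1_Y)^*\omega_Y$ and $(d+u\ddr)h_2 = (p^2_Y)^*\omega_Y - p_Z^*\omega_Z$. Since $M$ is the homotopy fibre product over $Y$, the pullbacks of $\omega_Y$ along $p^1_Y$ and $p^2_Y$ come with a canonical identification on $M$, so the sum $h_1+h_2$, pulled back to $M$, defines a closed 2-form of degree $n-1$ whose differential is $p_X^*\omega_X - p_Z^*\omega_Z$, precisely the pullback to $M$ of the symplectic form on $X \times \overline{Z}$.

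For nondegeneracy, I would use that $M$ is a homotopy pullback to obtain the fibre sequence $\bT_M \to \bT_{L_1}|_M \oplus \bT_{L_2}|_M \to \bT_Y|_M$ and, dually, the cofibre sequence $\bL_Y|_M \to \bL_{L_1}|_M \oplus \bL_{L_2}|_M \to \bL_M$. Splicing these with the Lagrangian fibre sequences $\bT_{L_1} \to p_X^*\bT_X \oplus (p^1_Y)^*\bT_Y \to \bL_{L_1}[n]$ and $\bT_{L_2} \to (p^2_Y)^*\bT_Y \oplus p_Z^*\bT_Z \to \bL_{L_2}[n]$, and cancelling the two copies of $\bT_Y|_M$ that appear with opposite signs (thanks to the $\overline{Y}$ on the $L_1$ side and the $Y$ on the $L_2$ side), should yield the desired fibre sequence
\[ \bT_M \to p_X^*\bT_X|_M \oplus p_Z^*\bT_Z|_M \to \bL_M[n]. \]
One then checks that the resulting map on tangent complexes is indeed the one induced by the isotropic structure $h_1+h_2$, which is a matter of unwinding definitions.

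The main difficulty I anticipate is the coherent bookkeeping in the second step: the underlying linear-algebra statement is the classical fact that a transverse composition of Lagrangian correspondences of symplectic vector spaces is again Lagrangian, but in the derived, closed-form setting one must carefully track all null-homotopies and ensure that the splicing argument descends from $\bL_{L_1}|_M \oplus \bL_{L_2}|_M$ to the cofibre $\bL_M$ in a way compatible with the closedness data witnessed by $h_1$ and $h_2$. In particular, verifying that the connecting maps in the three fibre sequences fit into a single bi-cartesian grid, rather than merely agreeing up to unspecified homotopy, will be the heart of the argument.
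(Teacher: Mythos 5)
Your proposal is correct and follows essentially the same route as the paper's proof: the isotropic structure is $\pi_1^*h_1+\pi_2^*h_2$, and nondegeneracy is obtained by summing the two Lagrangian pullback squares over $L_1\times_Y L_2$, splitting the two copies of $\bT_Y$ into diagonal and antidiagonal parts, and cancelling the latter via the identification $\bT_Y\cong\bL_Y[n]$ furnished by $\omega_Y$. The bookkeeping issue you flag is real but is handled in the paper exactly by this diagonal/antidiagonal splitting.
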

\begin{proof}
Suppose that the Lagrangian structures on $L_1$ and $L_2$ are given by the forms $h_1$ and $h_2$ respectively, i.e.
\[p_X^*\omega_X - (p^1_Y)^*\omega_Y = (\d+u\ddr)h_1,\quad (p^2_Y)^*\omega_Y - p_Z^*\omega_Z = (\d+u\ddr)h_2.\]

Denote $L=L_1\times_Y L_2$ and let $\pi_i\colon L\rightarrow L_i$ be the projections.

Then
\[\pi_1^*p_X^*\omega_X - \pi_1^*(p^1_Y)^*\omega_Y + \pi_2^*(p^2_Y)^*\omega_Y - \pi_2^*p_Z^*\omega_Z = (\d+u\ddr)\pi_1^*h_1 + (\d+u\ddr)\pi_2^*h_2.\]

Therefore,
\[\pi_1^*p_X^*\omega_X - \pi_2^*p_Z^*\omega_Z = (\d+u\ddr)(\pi_1^*h_1 + \pi_2^*h_2),\]
i.e. $\pi_1^*h_1 + \pi_2^*h_2$ defines an isotropic structure on $L\rightarrow X\times\overline{Z}$. Let us check that it is in fact Lagrangian.

$L_1\rightarrow X\times\overline{Y}$ and $L_2\rightarrow Y\times\overline{Z}$ are Lagrangian, so we have the following pullback squares
\[
\xymatrix{
\bT_{L_1} \ar[r] \ar[d] & 0 \ar[d]\\
p_X^*\bT_X\oplus (p^1_Y)^*\bT_Y \ar[r] & \bL_{L_1}[n]
}\qquad\qquad\qquad
\xymatrix{
\bT_{L_2} \ar[r] \ar[d] & 0\ar[d] \\
(p^2_Y)^*\bT_Y \oplus p_Z^*\bT_Z \ar[r] & \bL_{L_2}[n]
}
\]

Pulling them back to $L$ and adding together we get a pullback square
\[
\xymatrix{
\pi_1^*\bT_{L_1} \oplus \pi_2^*\bT_{L_2} \ar[r] \ar[d] & 0 \ar[d] \\
\pi_1^*p_X^*\bT_X\oplus \pi_1^*(p^1_Y)^*\bT_Y \oplus \pi_2^*(p^2_Y)^*\bT_Y \oplus \pi_2^*p_Z^*\bT_Z \ar[r] & \pi_1^*\bL_{L_1}[n] \oplus \pi_2^*\bL_{L_2}[n]
}
\]

We can split off two summands of $\pi_1^*(p^1_Y)^*\bT_Y$ into the diagonal and antidiagonal parts obtaining the pullback of the form
\[
\xymatrix{
\pi^*\bT_L \oplus g^*\bT_Z \oplus \pi_1^*p_1^*\bT_Y[-1] \ar[r] \ar[d] & 0 \ar[d] \\
\pi_1^*p_X^*\bT_X \oplus \pi_2^*p_Z^*\bT_Z \ar[r] & \pi_1^*\bL_{L_1}[n] \oplus \pi_2^*\bL_{L_2}[n] \oplus \pi_1^*p_1^*\bT_Y[1]
}
\]
with the obvious differentials in the top-left and bottom-right corners. Finally, using the identification in the bottom-right corner $\bT_Y\cong \bL_Y[n]$ given by the symplectic form $\omega_Y$ we get a pullback
\[
\xymatrix{
\bT_L \ar[r] \ar[d] & 0 \ar[d] \\
\pi_1^*p_X^*\bT_X \oplus \pi_2^*p_Z^*\bT_Z  \ar[r] & \bL_L[n]
}
\]

In other words, $L\rightarrow X\times\overline{Z}$ is Lagrangian as claimed.
\end{proof}

In the case $X=Z=\pt$ \autoref{thm:lagrcomp} reduces to \autoref{thm:ptvvsympl}. Indeed, a Lagrangian $L\rightarrow \pt$ is the same as an $(n-1)$-shifted symplectic stack.

More generally, suppose $f\colon X\rightarrow Y$ is a symplectic morphism. Then the graph \[\Gamma_f=X\times_Y Y\rightarrow X\times\overline{Y}\] carries an isotropic structure. We say that the morphism $f$ is \textit{nondegenerate} if its graph is Lagrangian. In this case we can pullback Lagrangians: let $L_2\rightarrow Y$ be a Lagrangian and $L_1=\Gamma_f$. Then the theorem gives a natural Lagrangian structure on the pullback $L_2\times_Y X$. One can view \autoref{thm:lagrcomp} for $Z$ being a point as a way to perform integral transforms for Lagrangians.

\section{Symplectic reduction}

In this and future sections $G$ will denote a reductive group of finite type over $k$.

\subsection{General definition}

The general procedure for a symplectic reduction starts with a 1-shifted symplectic stack $X$ together with a choice of a Lagrangian $L\rightarrow X$. Then the data of a symplectic reduction consists of:
\begin{enumerate}
\item A stack $M$ with a $G$-action.

\item A moment map $\mu\colon [M/G]\rightarrow X$ together with a Lagrangian structure.
\end{enumerate}

The isotropic conditions $dh_0 = f^*\omega_0, dh_1 + \ddr h_0 = f^*\omega_1, ...$ will be called the \textit{moment map equations}. We will see that these equations coincide with the usual moment map equations familiar from the theory of symplectic reduction.

By definition the \textit{reduced space} is $[M/G]\times_X L$. \autoref{thm:ptvvsympl} gives a natural symplectic structure on the reduced space.

\subsection{Ordinary Hamiltonian reduction}
Let $X=[\mathfrak{g}^*/G]$. The category of quasi-coherent sheaves $\QC(X)$ is the category of $G$-equivariant sheaves on $\mathfrak{g}^*$. The tangent complex $\bT_X\in \QC(X)$ is
\[\bT_X=\mathfrak{g}\otimes_k \cO_{\mathfrak{g}^*}[1]\oplus \mathfrak{g}^*\otimes_k \cO_{\mathfrak{g}^*},\]
with the differential given by the coadjoint action.

On $\mathfrak{g}^*$ we have a canonical ``Maurer--Cartan'' form $\omega_0\in\Omega^1(\mathfrak{g}^*)\otimes_k \mathfrak{g}^*$ given by the identity map $T_x\mathfrak{g}^*=\mathfrak{g}^*\rightarrow\mathfrak{g}^*$. It defines a two-form $\omega_0\in\Omega^2(\mathfrak{g}^*/G, 1)$ of degree 1. It is closed: $(\d+u\ddr)\omega_0=0$, where $\ddr\omega_0=0$ follows from the fact that $\omega_0$ does not depend on the point $x\in\mathfrak{g}^*$ and $\d\omega_0=0$ follows from the equivariance of $\omega_0$ with respect to the coadjoint action.

It gives a morphism $\omega_0\colon \bT_X\rightarrow \bL_X[1]$
\[
\xymatrix{
\mathfrak{g}\otimes_k \cO_{\mathfrak{g}^*} \ar[r] \ar^{\sim}[d] & \mathfrak{g}^*\otimes_k \cO_{\mathfrak{g}^*} \ar[d]^{\sim} \\
\mathfrak{g}\otimes_k \cO_{\mathfrak{g}^*} \ar[r] & \mathfrak{g}^*\otimes_k \cO_{\mathfrak{g}^*}
}
\]
which is clearly an isomorphism.

We have $[\mathfrak{g}^*/G]=T^*[1]\B G$. Therefore, the symplectic structure can alternatively be defined using a canonical one-form $\theta$ of degree 1. It is given by the identity function $\id\in\cO_{\mathfrak{g}^*}\otimes_k \mathfrak{g}^*$. Then $\omega_0 = \ddr\theta$.

An isotropic structure on $[M/G]\rightarrow X$ is a closed two-form $h$ of degree 0 on $M$, which is $G$-equivariant. Moreover, the condition $\d h_0 = f^*\omega_0$ is equivalent to
\[-\iota_{a(v)} h_0 = \ddr\mu(v)\]
for $v\in\mathfrak{g}$ and $a\colon \mathfrak{g}\rightarrow\Gamma(M, \bT_M)$ the action map. Lagrangian condition translates into the fact that $h_0$ has to be nondegenerate.

For example, the map $L=[\pt/G]\rightarrow[\mathfrak{g}^*/G]$ induced from the inclusion of the origin is Lagrangian.

\subsubsection{Example}

Let $M$ be a stack with a $G$ action. We define a moment map $\mu\colon T^*M\rightarrow \mathfrak{g}^*$ as follows. The action map $a\colon \mathfrak{g}\rightarrow\Gamma(M, \bT_M)$ gives an element of \[\mathfrak{g}^*\otimes_k \Gamma(M, \bT_M)\subset \mathfrak{g}^*\otimes_k \Gamma(M, \Sym_{\cO_M} \bT_M) \cong \Gamma(T^*M, \mathfrak{g}^*\otimes_k \cO_{T^*M}).\]

Recall that the canonical one-form $\theta$ on $T^*M$ is defined to be the composite
\[\bT_{T^*M}\rightarrow p^* \bT_M\rightarrow \cO_{T^*M},\]
where $p\colon T^*M\rightarrow M$ is the projection and $p^* \bT_M\rightarrow \cO_{T^*M}$ is adjoint to \[\bT_M\rightarrow p_*\cO_{T^*M} \cong \Sym_{\cO_M} \bT_M.\] Observe that
\[\iota_{a(v)} \theta = \mu(v).\]

We define $h_0 = \ddr \theta$. The moment map equation follows from the following calculation:
\[-\iota_{a(v)} \ddr \theta = \ddr\iota_{a(v)} \theta = \ddr\mu(v),\]
where we used $G$-invariance of $\theta$ in the second equality.

The symplectic reduction \[[T^*M/G]\times_{[\mathfrak{g}^*/G]} [\pt/G]\] is isomorphic to $T^*[M/G]$.

\subsection{Quasi-Hamiltonian reduction}

Consider the right action of $G$ on itself by conjugation: $a\mapsto g^{-1}ag=:\Ad_g(a)$ and let $X=[G/G]$. The tangent complex is
\[\mathfrak{g}\otimes_k\cO_G\rightarrow \bT_G\]
in degrees $-1$ and 0 with the differential $\mathfrak{g}\rightarrow \Gamma(G, \bT_G)$ given by the adjoint action \[x\in\mathfrak{g}\mapsto x^R - x^L,\] where $x^L$ and $x^R$ are vector fields generating the left and right actions of $G$ on itself. The cotangent complex is
\[\bL_G\rightarrow \mathfrak{g}^*\otimes_k \cO_G\]
in degrees 0 and 1 with the differential $\d$ given by
\[(\d\phi)(x) = -\iota_{(x^R-x^L)}\phi\]
for $\phi\in\bL_G$. At any point $a\in G$ we have $x^L = \Ad_a x^R$.

Recall the left and right Maurer--Cartan forms $\theta,\overline{\theta}\in\Omega^1(G)\otimes_k\mathfrak{g}$ defined by
\[\iota_v\theta = (a\in G\mapsto (L_{a^{-1}})_*v_a),\quad \iota_v\overline{\theta} = (a\in G\mapsto (R_{a^{-1}})_* v_a)\]
for a vector field $v\in\Gamma(G, \bT_G)$. For any point $a\in G$ we have
\[\theta = \Ad_a\overline{\theta}.\] The contraction of the Maurer--Cartan forms with the invariant vector fields are as follows:
\[\iota_{(x^L)}\theta = \Ad_a(x), \iota_{(x^R)}\theta = x, \iota_{(x^L)}\overline{\theta} = x, \iota_{(x^R)}\overline{\theta} = \Ad_{a^{-1}}(x).\]
Furthermore, we have the Maurer--Cartan equations
\[\ddr\theta +\frac{1}{2}[\theta, \theta] = 0,\quad \ddr\overline{\theta} - \frac{1}{2}[\overline{\theta}, \overline{\theta}] = 0.\]

The sheaf of two-forms on $[G/G]$ is \[\bigwedge^2 \bL_G \oplus \bL_G\otimes_k \mathfrak{g}^*[-1] \oplus \cO_G\otimes_k\Sym^2(\mathfrak{g}^*)[-2].\] Let $(-,-)\colon \mathfrak{g}\otimes_k\mathfrak{g}\rightarrow k$ be a $G$-invariant nondegenerate symmetric bilinear form. Then we can define a two-form $\omega_0$ of degree 1 by
\begin{equation}
\omega_0(y) = -\frac{1}{2}(\theta+\overline{\theta}, y)
\end{equation}
for any $y\in\mathfrak{g}$.

\begin{lm}
$\omega_0$ is $\d$-closed.
\end{lm}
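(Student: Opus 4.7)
To show $\d\omega_0 = 0$, the plan is first to identify what the closedness condition amounts to in terms of the components of $\bigwedge^2 \bL_{[G/G]}$, and then verify it by a short calculation with Maurer--Cartan forms on $G$.

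The form $\omega_0$ lies in the middle summand $\bL_G \otimes_k \mathfrak{g}^*[-1]$, which sits in cohomological degree $1$; the only degree-$2$ summand of $\bigwedge^2 \bL_{[G/G]}$ is $\cO_G \otimes_k \Sym^2(\mathfrak{g}^*)[-2]$. Since $\d$ raises degree by one, the closedness of $\omega_0$ reduces to the vanishing of the image of $\omega_0$ under the induced differential
\[\bL_G\otimes_k \mathfrak{g}^*[-1] \longrightarrow \cO_G\otimes_k \Sym^2(\mathfrak{g}^*)[-2],\]
which comes from the differential $\phi \mapsto (x \mapsto -\iota_{x^R - x^L}\phi)$ on $\bL_{[G/G]}$ together with graded symmetrization. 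Explicitly, $\d\omega_0 = 0$ is equivalent to the symmetric identity
\[\iota_{x^R - x^L}\,\omega_0(y) + \iota_{y^R - y^L}\,\omega_0(x) = 0 \qquad \text{for all } x, y \in \mathfrak{g}.\]

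Next, I would substitute $\omega_0(y) = -\tfrac{1}{2}(\theta+\overline{\theta}, y)$ and use the contraction formulas $\iota_{x^R}\theta = x$, $\iota_{x^L}\theta = \Ad_a x$, $\iota_{x^R}\overline{\theta} = \Ad_{a^{-1}}x$, $\iota_{x^L}\overline{\theta} = x$ recorded earlier in the paper. A direct evaluation shows
\[\iota_{x^R - x^L}\,\omega_0(y) = -\tfrac{1}{2}\bigl(\Ad_{a^{-1}}x - \Ad_a x,\, y\bigr),\]
so the symmetric sum expands into four adjoint-twisted pairings.

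The only real content is the cancellation of this sum, which I expect to be the sole (and mild) obstacle. I would obtain it by invoking $\Ad$-invariance, $(\Ad_{a^{-1}}u, v) = (u, \Ad_a v)$, together with the symmetry of $(-,-)$: these two identities convert the $\Ad_{a^{-1}}$-pairings into $\Ad_a$-pairings with opposite signs, and the four terms cancel in pairs. Once the closedness condition has been reduced to the symmetric identity above, nothing further is required beyond this bookkeeping.
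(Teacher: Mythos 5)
Your proposal is correct and follows essentially the same route as the paper: both reduce $\d$-closedness to the vanishing of the symmetric part of the $\mathfrak{g}^*\otimes\mathfrak{g}^*$-valued contraction $\iota_{x^R-x^L}\omega_0(y)$, compute it via the Maurer--Cartan contraction formulas to get $\pm\tfrac12(\Ad_{a^{-1}}x-\Ad_a x,\,y)$, and conclude by $\Ad$-invariance of the pairing. The only cosmetic difference is that you phrase the condition as vanishing of the symmetrization while the paper phrases it as antisymmetry of $\d\omega_0$; these are the same statement.
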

\begin{proof}
If we view $\d\omega_0$ as an element of $\mathfrak{g}^*\otimes_k\mathfrak{g}^*$, we have to prove that it is antisymmetric.

\begin{align*}
\d\omega_0(x, y) &= \frac{1}{2}(\iota_{(x^R-x^L)}\theta + \iota_{(x^R-x^L)}\overline{\theta}, y) \\
&= \frac{1}{2}(x - \Ad_a(x) + \Ad_{a^{-1}}(x) - x, y) \\
&= \frac{1}{2}(\Ad_{a^{-1}}(x), y) - \frac{1}{2}(x, \Ad_{a^{-1}}(y)).
\end{align*}
\end{proof}

Although $\omega_0$ is not $\ddr$-closed, it is homotopically $\ddr$-closed: there is a differential form $\omega_1$, such that $\ddr\omega_0 + d\omega_1 = 0$. Indeed, define a three-form $\omega_1$ of degree 0 by
\begin{equation}
\omega_1 = \frac{1}{12}(\theta, [\theta, \theta]).
\end{equation}

\begin{lm}
The equation $\ddr\omega_0 + \d\omega_1 = 0$ is satisfied.
\end{lm}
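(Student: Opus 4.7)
The plan is to verify the identity as one of $\mathfrak{g}^*$-valued $2$-forms on $G$: after pairing both sides with $y \in \mathfrak{g}$, the equation reduces to $\ddr(\omega_0(y)) = \iota_{y^R - y^L}\omega_1$ in $\Omega^2(G)$. The right-hand side appears because the stacky differential $\d$ on $\omega_1 \in \bigwedge^3 \bL_G$ is computed by extending the map $\bL_G \to \mathfrak{g}^* \otimes \cO_G$, $\phi \mapsto (y \mapsto -\iota_{y^R - y^L}\phi)$, as a graded derivation; tracking the Koszul signs (since $\bL_G$ sits in odd degree inside $\bL_{[G/G]}[1]$), one obtains the clean formula $(\d\omega_1)(y) = -\iota_{y^R - y^L}\omega_1$.

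I would first compute the left-hand side directly. Differentiating $\omega_0(y) = -\tfrac{1}{2}(\theta + \overline\theta, y)$ and inserting the Maurer--Cartan equations from the excerpt,
\[\ddr(\omega_0(y)) = -\tfrac{1}{2}(\ddr\theta + \ddr\overline\theta, y) = \tfrac{1}{4}([\theta,\theta], y) - \tfrac{1}{4}([\overline\theta,\overline\theta], y).\]

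Next I would unpack the right-hand side. Since the Cartan $3$-form $(\theta, [\theta, \theta])$ is totally antisymmetric in its three $\theta$-slots (a direct consequence of $G$-invariance of $(-,-)$), contraction of $\omega_1$ with any vector field $v$ simplifies to $\iota_v\omega_1 = \tfrac{1}{4}(\iota_v\theta, [\theta, \theta])$; the prefactor $\tfrac{1}{12}$ in $\omega_1$ absorbs a combinatorial $3$ from three identical contributions. Applying this with $v = y^R - y^L$, substituting $\iota_{y^R - y^L}\theta = y - \Ad_a y$, and using $G$-invariance of $(-,-)$ together with $\overline\theta = \Ad_{a^{-1}}\theta$ to rewrite the $\Ad_a y$ term,
\[\iota_{y^R - y^L}\omega_1 = \tfrac{1}{4}(y - \Ad_a y, [\theta,\theta]) = \tfrac{1}{4}([\theta,\theta], y) - \tfrac{1}{4}([\overline\theta,\overline\theta], y),\]
matching $\ddr(\omega_0(y))$ on the nose. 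The main bookkeeping obstacle is getting the signs and the overall constant right in the derivation formula for $\d$ on $\bigwedge^3 \bL_G$: the Koszul signs are essential for the clean identification $(\d\omega_1)(y) = -\iota_{y^R - y^L}\omega_1$, and the constant $\tfrac{1}{12}$ in the definition of $\omega_1$ is chosen precisely so that the combinatorial factor of $3$ coming from the antisymmetry argument produces the $\tfrac{1}{4}$ that matches $\ddr\omega_0$.
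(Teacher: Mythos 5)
Your proposal is correct and follows essentially the same route as the paper: reduce to the pointwise identity $\ddr(\omega_0(y)) = \iota_{y^R-y^L}\omega_1$, compute the left side via the Maurer--Cartan equations, compute the right side using total antisymmetry of the Cartan $3$-form (the factor $3$ turning $\tfrac{1}{12}$ into $\tfrac14$) together with $\iota_{y^R-y^L}\theta = y - \Ad_a y$, and close the gap with $\Ad$-invariance of the pairing. The only cosmetic difference is that the paper first rewrites $\omega_1$ symmetrically as $\tfrac{1}{24}(\theta,[\theta,\theta]) + \tfrac{1}{24}(\overline\theta,[\overline\theta,\overline\theta])$ before contracting, whereas you keep $\omega_1$ in terms of $\theta$ alone and convert $(\Ad_a y, [\theta,\theta])$ into $(y,[\overline\theta,\overline\theta])$ at the end.
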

\begin{proof}
For $x\in\mathfrak{g}$ we must prove
\[-\ddr\frac{1}{2}(\theta + \overline{\theta}, x) - \iota_{(x^R-x^L)}\omega_1 = 0.\]

Let us split
\[\omega_1 = \frac{1}{24}(\theta, [\theta, \theta]) + \frac{1}{24}(\overline{\theta}, [\overline{\theta}, \overline{\theta}]).\] Then we have to prove
\[\frac{1}{4}([\theta, \theta], x) - \frac{1}{4}([\overline{\theta}, \overline{\theta}], x) - \frac{1}{8}(\iota_{(x^R-x^L)}\theta, [\theta, \theta]) - \frac{1}{8} (\iota_{(x^R-x^L)}\overline{\theta}, [\overline{\theta}, \overline{\theta}])=0.\]

This is equivalent to
\[2([\theta, \theta], x) - 2([\overline{\theta}, \overline{\theta}], x) - (x - \Ad_a(x), [\theta, \theta]) - (\Ad_{a^{-1}}(x)-x, [\overline{\theta}, \overline{\theta}])=0.\]

The claim follows from the invariance of the bilinear form under conjugation.
\end{proof}

\begin{lm}
The form $\omega_1$ is $\ddr$-closed.
\end{lm}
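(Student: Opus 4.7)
The plan is to differentiate $\omega_1$ using the graded Leibniz rule for the $\mathfrak{g}$-valued pairing, eliminate $\ddr\theta$ via the Maurer--Cartan equation, and observe that the remaining triple brackets of $\theta$ vanish by the graded Jacobi identity. Concretely, I would first write
\[
\ddr\omega_1 = \frac{1}{12}(\ddr\theta, [\theta,\theta]) - \frac{1}{12}(\theta, \ddr[\theta,\theta]),
\]
then compute $\ddr[\theta,\theta] = 2[\ddr\theta,\theta]$ (which follows from graded antisymmetry of the bracket on $\mathfrak{g}$-valued 1-forms together with the graded Leibniz rule), and substitute $\ddr\theta = -\frac{1}{2}[\theta,\theta]$ everywhere.

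After substitution, $\ddr\omega_1$ is a linear combination of $([\theta,\theta],[\theta,\theta])$ and $(\theta, [[\theta,\theta],\theta])$. Invariance of the bilinear form, $(x,[y,z]) = ([x,y],z)$, converts the first pairing into a scalar multiple of the second, so it suffices to show that $[[\theta,\theta],\theta] = 0$. This is the standard fact that applying the graded Jacobi identity to three copies of the 1-form $\theta$ makes the three cyclic terms coincide (each sign $(-1)^{1\cdot 2}$ equals $1$), forcing $3[[\theta,\theta],\theta] = 0$.

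There is no genuine obstacle; the only care needed is to keep the signs in the graded Leibniz rule, graded Lie bracket, and symmetric invariant pairing consistent throughout. The statement is essentially the classical closedness of the Cartan $3$-form on a Lie group equipped with an $\Ad$-invariant bilinear form on its Lie algebra.
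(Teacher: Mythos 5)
Your proposal is correct and follows essentially the same route as the paper: both expand $\ddr\omega_1$ by the graded Leibniz rule, eliminate $\ddr\theta$ via the Maurer--Cartan equation, and use invariance of the pairing to make the remaining terms vanish. The only cosmetic difference is that the paper kills the last term by observing that $[\theta,\theta]=-2\ddr\theta$ is $\ddr$-exact (so $\ddr[\theta,\theta]=0$ from $\ddr^2=0$), whereas you invoke the graded Jacobi identity $[[\theta,\theta],\theta]=0$; given the Maurer--Cartan equation these are the same fact.
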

\begin{proof}
From the Maurer--Cartan equation we see that $[\theta, \theta]$ is $\ddr$-closed. Then
\[\ddr\omega_1 = \frac{1}{12}(\ddr\theta, [\theta, \theta]) = -\frac{1}{12}([\theta, \ddr\theta], \theta) = \frac{1}{12}(\ddr[\theta, \theta], \theta) = 0,\]
where we used invariance of the bilinear form in the second equality.
\end{proof}

The previous three lemmas prove that $\omega_0 + u\omega_1$ is a closed two-form. To see that it is symplectic, we have to check that it is nondegenerate.

\begin{lm}
The two-form $\omega_0\colon \bT_{[G/G]}\rightarrow \bL_{[G/G]}[1]$ is nondegenerate.
\end{lm}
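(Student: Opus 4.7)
My strategy is to verify nondegeneracy pointwise on the atlas $G\to[G/G]$, reducing the problem to a piece of linear algebra at each point $a\in G$. Pulling back along the atlas, both the tangent and shifted cotangent complexes become the two-term complexes recorded in the paragraphs preceding the lemma, and a left-invariant trivialization of $\bT G$ combined with the identification $\mathfrak{g}^\ast\cong\mathfrak{g}$ from the invariant bilinear form lets me write each stalk as a complex of the shape $\mathfrak{g}\to\mathfrak{g}$ in degrees $-1,0$. Under these identifications the tangent-complex differential at $a$ becomes $\Ad_{a^{-1}}-1$ and the shifted-cotangent differential becomes $1-\Ad_a$ (up to an overall sign coming from the shift $[1]$).

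Next I would unpack the formula $\omega_0(y)=-\tfrac12(\theta+\overline{\theta},y)$ into the two components of the induced map $\bT_{[G/G]}\to\bL_{[G/G]}[1]$. Using the contraction identities $\iota_v\theta=(L_{a^{-1}})_\ast v$ and $\iota_v\overline{\theta}=\Ad_a(L_{a^{-1}})_\ast v$ recorded just above the lemma together with $\Ad$-invariance of $(-,-)$, the degree $-1$ component is the map $y\mapsto -\tfrac12(1+\Ad_{a^{-1}})y$ and the degree $0$ component is $x\mapsto -\tfrac12(1+\Ad_a)x$. Commutativity with the two differentials is a one-line expansion: both composites collapse to $\pm\tfrac12(\Ad_{a^{-1}}-\Ad_a)$, so the pair really defines a chain map.

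The decisive step is the cohomology computation. Since $\Ad_{a^{-1}}$ and $\Ad_a$ are mutually inverse automorphisms of $\mathfrak{g}$, one has $\ker(\Ad_{a^{-1}}-1)=\ker(1-\Ad_a)=\mathfrak{g}^a$ (the centralizer Lie algebra) and $\im(\Ad_{a^{-1}}-1)=\im(1-\Ad_a)$, so both complexes have cohomology $\mathfrak{g}^a$ in degree $-1$ and the common quotient $\mathfrak{g}/\im(1-\Ad_a)$ in degree $0$. On $\mathfrak{g}^a$ the operator $\Ad_{a^{-1}}$ acts as the identity, so $-\tfrac12(1+\Ad_{a^{-1}})$ restricts to $-\id$ on $H^{-1}$; modulo $\im(1-\Ad_a)$ the class of $\Ad_a x$ coincides with that of $x$, so $-\tfrac12(1+\Ad_a)$ descends to $-\id$ on $H^0$. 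Hence the chain map induced by $\omega_0$ is a quasi-isomorphism at every $a\in G$, which is exactly the nondegeneracy claim.

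The main obstacle is sign and convention book-keeping: one has to fix the sign convention for the shift $[1]$ on the cotangent complex and make sure that $\omega_0\in\bL_G\otimes\mathfrak{g}^\ast$ really unpacks into the two components of $\bT_{[G/G]}\to\bL_{[G/G]}[1]$ with the signs computed above. Once this is pinned down, the rest is the simple linear-algebra observation that $\Ad_a$ acts as the identity on all centralizer data.
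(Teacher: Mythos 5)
Your proof is correct and follows essentially the same route as the paper: both reduce the claim to pointwise linear algebra on the two-term complexes over the atlas $G$, with the decisive input being that $\Ad_a$ acts trivially on the centralizer $\mathfrak{g}^a=H^{-1}$, where the pairing collapses to $-(\,\cdot\,,\,\cdot\,)$ and is therefore nondegenerate. The only (minor) difference is that you verify the isomorphism on $H^0$ by a second direct computation, whereas the paper gets it for free by noting that the two components of the chain map are mutually dual morphisms of vector bundles of equal rank, so injectivity on $H^{-1}$ already forces a quasi-isomorphism.
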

\begin{proof}
$\omega_0$ gives the following chain map:
\[
\xymatrix{
\mathfrak{g}\otimes_k \cO_G \ar[r]\ar[d] & \bT_G\ar[d] \\
\bL_G \ar[r] & \mathfrak{g}^*\otimes_k \cO_G,
}
\]
where the vertical maps are dual to each other. As the vertical maps are morphisms of vector bundles of the same rank, we just have to check that one of them (say, the left one) is injective on cohomology.

Consider a point $a\in G$ and a closed degree 0 element $v\in\mathfrak{g}$ of $\bT_{[G/G]}[-1]$. Closedness of $v$ is equivalent to the equation $\Ad_{a^{-1}}v = v$.

Its image under $\omega_0$ is
\[-\frac{1}{2}(\theta + \overline{\theta}, v).\]

If this form is zero, its contraction with every vector field of the form $x^L$ is zero as well. That is,
\[-\frac{1}{2}(\Ad_a(x) + x, v) = 0.\]

However,
\[-\frac{1}{2}(\Ad_a(x) + x, v) = -\frac{1}{2}(x, \Ad_{a^{-1}}(v) + v) = -(x, v).\]

It is zero for all $x\in\mathfrak{g}$ if and only if $v = 0$, i.e. the left vertical map is injective.
\end{proof}

Consider a $G$-equivariant map $\mu\colon M\rightarrow G$ of \textit{right} $G$-spaces. It induces an isotropic map $[M/G]\rightarrow [G/G]$ if we are given a two-form $h_0$ of degree 0 on $[M/G]$, such that
\[\mu^*\omega_0 = \d h_0,\quad \mu^*\omega_1 = \ddr h_0.\]
Substituting the expressions for $\omega_0$ and $\omega_1$ we get
\begin{align*}
\iota_{a(v)}h_0 = \frac{1}{2}\mu^*(\theta+\overline{\theta}, v)\\
\ddr h_0 = \frac{1}{12}\mu^*(\theta, [\theta, \theta]).
\end{align*}
These are precisely the moment map equations for the quasi-Hamiltonian reduction. One sees that the equations coincide with \cite[Definition 2.2]{AMM} up to a sign in the second equation since \cite{AMM} consider left $G$-actions. In the future we will call Lagrangians $X\rightarrow [G/G]$ \textit{quasi-Hamiltonian spaces}.

\subsubsection{Example}

This example is due to Alekseev, Malkin and Meinrenken \cite[Section 9]{AMM}.

Let $M$ be a closed oriented surface together with a point $x\in M$. Let $\Loc_G(M)$ be the moduli space of local systems on $M$ also known as the \textit{character stack}. The moment map
\[\mu\colon\Loc_G(M\backslash x)\rightarrow [G/G]\]
is given by the monodromy around the puncture $x$. This gives $\Loc_G(M\backslash x)$ the structure of a quasi-Hamiltonian space. The symplectic reduction of $\Loc_G(M\backslash x)$ is $\Loc_G(M)$, which inherits a symplectic form.

For instance, let $M = T^2$ be the 2-torus. More general character varieties can be obtained by fusion (see the next section). We have a moment map
\[\mu\colon G\times G\rightarrow G\]
given by the commutator $a, b\mapsto aba^{-1}b^{-1}$. The two-form $h_0$ on $G\times G$ is given by
\begin{equation}
h_0 = \frac{1}{2}(p_1^*\theta, p_2^*\overline{\theta}) + \frac{1}{2}(p_2^*\theta, p_1^*\overline{\theta}) + \frac{1}{2}(m^*\theta, i^*m^* \overline{\theta}),
\end{equation}
where $m\colon G\times G\rightarrow G$ is the multiplication, $p_i\colon G\times G\rightarrow G$ are the projection and $i\colon G\times G\rightarrow G\rightarrow G$ is the inversion on each factor.

We will compute the form $h_0$ in the AKSZ formalism in the last section.

\section{Multiplicative torsor on the group}

\label{sect:multiplicative}

\subsection{Multiplicative structures}

\subsubsection{}

In this section we will show that there is a multiplicative $\Omega^{2,cl}$-torsor on $G$, which gives rise to the 1-shifted symplectic form on $[G/G]$ described previously.

Let $\cA$ be a natural system of sheaves of abelian groups on stacks. That is, it is a collection of sheaves $\cA_X$ for every stack $X$ together with compatible maps $f^{-1}\cA_Y\rightarrow \cA_X$ for every morphism $f\colon X\rightarrow Y$. Given an $\cA_Y$-torsor $\cT$ on $Y$, we define the pullback $\cA$-torsor $f^*\cT$ to be
\[f^*\cT = f^{-1}\cT\times_{f^{-1}\cA_Y}\cA_X.\]

\begin{defn}
A \textit{multiplicative $\cA$-torsor} $\cT$ on $G$ is an $\cA$-torsor $\cT$ together with the data of an isomorphism
\[\phi\colon  m^*\cT\cong p_1^*\cT\times_{\cA} p_2^*\cT=:\cT\boxtimes \cT\]
satisfying the following pentagon diagram expressing associativity:
\[
\xymatrix{
m_{12}^*m^*\cT \ar_{m_{12}^*\phi}[d] \ar^{\sim}[rr] && m_{23}^* m^*\cT \ar^{m_{23}^*\phi}[d] \\
m_{12}^*(\cT\boxtimes \cT) \ar_{\phi\boxtimes\id}[dr] && m_{23}^*(\cT\boxtimes \cT) \ar^{\id\boxtimes\phi}[dl] \\
& \cT\boxtimes \cT\boxtimes \cT &
}
\]
\end{defn}

The maps $m_{12}, m_{23}\colon G\times G\times G\rightarrow G\times G$ are multiplications of the first two and the last two factors respectively.

Let $\B G$ be the classifying stack of a group $G$ and let the simplicial scheme $\B_\bullet G$ be the nerve of the map $\pt\rightarrow G$ classifying the trivial torsor. The simplicial scheme $\B_\bullet G$ is $G^n$ in degree $n$ with the face maps coming from the multiplication of adjacent elements.

Suppose that all $\cA$-gerbes on a point admit a trivialization. Then a multiplicative torsor is just an element of
\[\Tot \Gamma(\B_\bullet G, \cA[2]).\]
Indeed, an element $\cT\in \Tot \Gamma(\B_\bullet G, \cA[2])$ is an $\cA$-gerbe on a point which we assume to be trivial together with an isomorphism between two pullbacks $G\rightrightarrows \pt$ given by an $\cA$-torsor $\cT$ on $G$. Finally, we have a trivialization of $p_2^*\cT\times_\cA m^*\cT^{-1}\times_\cA p_1^*\cT$ on $G\times G$, i.e. an identification $m^*\cT\cong p_2^*\cT\times_\cA p_1^*\cT$ satisfying the associativity condition written above.

More generally, given a complex of sheaves of abelian groups $\cA$, we define a multiplicative torsor over $\cA$ to be an element of $\Tot\Gamma(\B_\bullet G, \cA[2])$.

By the universal property of totalization we have a natural map
\[\Gamma(\B G, \cA[2])\rightarrow \Tot\Gamma(\B_\bullet G, \cA[2]).\]
If $\cA$ satisfies descent with respect to the smooth topology, this map is an isomorphism. Hence, in this case a multiplicative $\cA$-torsor on $G$ is the same as an $\cA$-gerbe on $\B G$.

Given a multiplicative torsor over $\cA$, we can descend it to an $\cA$-torsor on the adjoint quotient $[G/G]$. Indeed, let $f\colon G\rightarrow G\times G$ be the map that sends $g$ to $(g^{-1}, g)$. Then $fm$ is the constant map that sends $g\mapsto e$. Therefore, we have a trivialization
\[\cA\cong f^*m^*\cT\stackrel{\phi}\cong f^*(\cT\boxtimes \cT).\]

Consider the composite $\Ad\colon  G\times G\stackrel{p_2\times f_{13}}\rightarrow G\times G\times G\stackrel{m}\rightarrow G$ given by $a,g\mapsto (g^{-1},a,g)\mapsto g^{-1}ag$. Then the pullback of $\cT$ along $\Ad$ is isomorphic to $\cT\boxtimes \cA$.

A section $s\in \mH^0(G, \cT)$ is $G$-invariant, i.e. is a pullback of a section over $[G/G]$, if the element $\Ad^*s\in \mH^0(G\times G, \Ad^* \cT)$ coincides with $p_1^* s\in \mH^0(G\times G, \cT\boxtimes \cA)$ under the isomorphism $\Ad^*\cT\cong \cT\boxtimes \cA$.

\subsection{Multiplicative torsors over $\cA=\Omega^2$}

As $G$ is affine, $\Gamma(\B_\bullet G, \Omega^2[2])$ is concentrated in degree 2. Therefore, an element \[\cT\in \mH^0(\Tot\Gamma(\B_\bullet G, \Omega^2[2]))\] boils down to a two-form $\phi\in \Omega^2(G\times G)$ satisfying the associativity condition
\begin{equation}
m_{23}^*\phi + p_{23}^*\phi = m_{12}^*\phi + p_{12}^*\phi.
\label{eq:assoc}
\end{equation}

Let us fix this form to be
\begin{equation}
\phi = -\frac{1}{2}(p_1^*\theta, p_2^*\overline{\theta}).
\end{equation}

To check associativity, let us write down the pullbacks of the Maurer--Cartan forms under multiplication:
\[m^*\theta = \Ad_b p_1^*\theta + p_2^*\theta,\quad m^*\overline{\theta} = p_1^*\overline{\theta} + \Ad_{a^{-1}}p_2^*\overline{\theta},\]
where $a$ and $b$ are coordinates on the two factors of $G$. Hence, the associativity condition becomes
\[(p_1^*\theta, p_2^*\overline{\theta} + \Ad_{b^{-1}}p_3^*\overline{\theta}) + (p_2^*\theta, p_3^*\overline{\theta}) = (\Ad_{b}p_1^*\theta + p_2^*\theta, p_3^*\overline{\theta}) + (p_1^*\theta, p_2^*\overline{\theta}).\]

\subsubsection{}
Finally, let us work out what it means for a section $s\in \mH^0(G, \cT)\cong \mH^0(G, \Omega^2)$ to be invariant under conjugation. As before, denote by $f\colon G\rightarrow G\times G$ the map $g\mapsto (g^{-1}, g)$ and $p_2\times f_{13}\colon G\times G\rightarrow G\times G\times G$ the map $(a, g)\mapsto (g^{-1}, a, g)$. The section $s$ is $G$-invariant if
\[\Ad^*s - f_{13}^*m_{12}^* \phi - f_{13}^*p_{12}^* \phi + f_{13}^*p_{13}^*\phi = p_1^*s.\]

The term $f_{13}^*p_{13}^*\phi$ vanishes, since it is equal to $\frac{1}{2}(p_2^*\overline{\theta}, p_2^*\overline{\theta})=0$. The other two terms containing $\phi$ are
\begin{align*}
\frac{1}{2}f_{13}^*(\Ad_gp_1^*\theta + p_2^*\theta, p_3^*\overline{\theta}) + \frac{1}{2}f_{13}^*(p_1^*\theta, p_2^*\overline{\theta}) &= \frac{1}{2}(-\Ad_ap_2^*\overline{\theta} + p_1^*\theta, p_2^*\overline{\theta}) - \frac{1}{2}(p_2^*\overline{\theta}, p_1^*\overline{\theta}) \\
&= \frac{1}{2}(p_1^*\theta - \Ad_ap_2^*\overline{\theta} + p_1^*\overline{\theta}, p_2^*\overline{\theta}).
\end{align*}

Therefore, a section $s$ is $G$-invariant if
\begin{equation}
\Ad^*s = p_1^*s - \frac{1}{2}(p_1^*\theta - \Ad_ap_2^*\overline{\theta} + p_1^*\overline{\theta}, p_2^*\overline{\theta}).
\label{invsectioncech}
\end{equation}

Picking out different components of this equation, we get the following consequences:
\begin{enumerate}
\item Restricting to $G\times \{g\}\subset G\times G$, we get
\[\Ad_g^* s = s,\]
i.e. the form $s$ has to be invariant under the adjoint action.

\item Contracting the equation with a vector field $v^L$ generating a left action on the second factor of $G$ and then restricting it to the first factor of $G$, we get
\begin{equation}
\iota_{(v^R - v^L)}s = \frac{1}{2}(\theta+\overline{\theta}, v).
\label{invsection}
\end{equation}

\item Finally, contracting the equation with $v^L$ and $w^L$ along the second $G$ factor, we get
\[\iota_{(w^R - w^L)} \iota_{(v^R - v^L)} s = \frac{1}{2}(\Ad_a (v), w) - \frac{1}{2}(\Ad_a(w), v) = \frac{1}{2}(\Ad_{a^{-1}}(w) - \Ad_a(w), v),\]
which follows from equation \eqref{invsection} since $\iota_{(w^R - w^L)}(\theta+\overline{\theta}) = \Ad_{a^{-1}}(w) - \Ad_a(w)$.
\end{enumerate}

On $[G/G]$ we can write the equation \eqref{invsection} as
\[\d s(x) = -\frac{1}{2}(\theta+\overline{\theta}, x) = \omega_0(x).\]
In other words, $s$ is a section of the $\Omega^2$-torsor with class $\omega_0\in \mH^1([G/G], \Omega^2)$.

\subsection{Multiplicative torsors over $\cA=\Omega^{2, cl}$}

Since $G$ is affine, the fibers of the forgetful map
\[\{\text{multiplicative $\Omega^{2,cl}$-torsors $\cT$}\}\rightarrow \{\text{multiplicative $\Omega^2$-torsors $\cT$}\}\]
consist of multiplicative sections of the induced $\Omega^3$-torsor $\ddr\cT$. Explicitly, these are 3-forms $s$, such that
\[m^*s + \ddr\phi = p_1^*s + p_2^*s.\]

\begin{lm}
The three-form $\omega_1=\frac{1}{12}(\theta, [\theta, \theta])$ is a multiplicative section of the multiplicative $\Omega^2$-torsor defined by the two-form $-\phi$.
\label{lemma:chimult}
\end{lm}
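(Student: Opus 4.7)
The plan is to verify the multiplicativity equation
\[
m^*\omega_1 - p_1^*\omega_1 - p_2^*\omega_1 = \ddr\phi
\]
directly from the Maurer--Cartan calculus. This is essentially the classical Polyakov--Wiegmann identity for the WZW three-form, so the strategy is to expand both sides in terms of the left and right Maurer--Cartan forms and match them term by term using the invariance of the bilinear form.

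The key ingredients are two identities already used in the paper: the multiplication formula $m^*\theta = \Ad_b\, p_1^*\theta + p_2^*\theta$ and the relation $p_2^*\theta = \Ad_b\, p_2^*\overline{\theta}$. The first step is to write $A=\Ad_b p_1^*\theta$ and $B=p_2^*\theta$, so that
\[
12\,m^*\omega_1 = (A+B,\,[A+B,A+B]).
\]
Expanding and using that $(\alpha,[\beta,\gamma])$ is totally symmetric in its three 1-form arguments (a consequence of graded-commutativity $[\beta,\gamma]=[\gamma,\beta]$ for odd forms together with the Cartan identity $(x,[y,z])=([x,y],z)$), the six resulting terms collapse into
\[
(A,[A,A]) + (B,[B,B]) + 3(A,[B,B]) + 3(B,[A,A]).
\]
By $\Ad$-invariance of $(-,-)$ the first two terms give exactly $12\,p_1^*\omega_1 + 12\,p_2^*\omega_1$, so the multiplicativity defect reduces to
\[
m^*\omega_1 - p_1^*\omega_1 - p_2^*\omega_1 = \tfrac{1}{4}\bigl((A,[B,B]) + (B,[A,A])\bigr).
\]

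The second step is to pass from $\theta$ to $\overline{\theta}$ on the second factor. Using $\Ad_{b^{-1}} p_2^*\theta = p_2^*\overline{\theta}$ and invariance again, the cross terms become
\[
\tfrac{1}{4}(p_1^*\theta,[p_2^*\overline{\theta},p_2^*\overline{\theta}])
 + \tfrac{1}{4}(p_2^*\overline{\theta},[p_1^*\theta,p_1^*\theta]).
\]
Finally, $\ddr\phi$ is computed using the two Maurer--Cartan equations $\ddr\theta = -\tfrac12[\theta,\theta]$ and $\ddr\overline{\theta} = \tfrac12[\overline{\theta},\overline{\theta}]$, which yield exactly the same expression. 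Comparing gives the desired equality.

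The main obstacle is purely bookkeeping: tracking signs from wedge products of odd forms and correctly invoking the symmetries of $(\alpha,[\beta,\gamma])$ when permuting the three 1-form entries. Once one verifies carefully that this expression is symmetric under all permutations of its arguments (rather than alternating, as one might naively expect from the case of Lie algebra elements), the rest of the computation is mechanical.
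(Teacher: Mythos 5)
Your proposal is correct and follows essentially the same route as the paper: expand $m^*\omega_1$ via $m^*\theta=\Ad_b p_1^*\theta+p_2^*\theta$, peel off $p_1^*\omega_1+p_2^*\omega_1$ from the pure terms, collapse the six cross terms using the total symmetry of $(\alpha,[\beta,\gamma])$ and $\Ad$-invariance, and match the result against $\ddr\phi$ computed from the two Maurer--Cartan equations. Your explicit justification that $(\alpha,[\beta,\gamma])$ is symmetric (not alternating) in its $1$-form arguments is a point the paper uses implicitly, but the argument is the same.
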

\begin{proof}
Recall that $m^*\theta = \Ad_bp_1^*\theta + p_2^*\theta$. Therefore,
\begin{align*}
m^*\omega_1 &= \frac{1}{12}(m^*\theta, [m^*\theta, m^*\theta]) \\
&= \frac{1}{12}(\Ad_bp_1^*\theta + p_2^*\theta, [\Ad_bp_1^*\theta + p_2^*\theta, \Ad_bp_1^*\theta + p_2^*\theta]).
\end{align*}

We also have
\begin{align*}
\ddr\phi &= -\frac{1}{2}(p_1^*\ddr\theta, p_2^*\overline{\theta}) + \frac{1}{2}(p_1^*\theta, p_2^*\ddr\overline{\theta}) \\
&= \frac{1}{4}(p_1^*[\theta, \theta], p_2^*\overline{\theta}) + \frac{1}{4}(p_1^*\theta, p_2^*[\overline{\theta}, \overline{\theta}])
\end{align*}

There are eight terms in $m^*\omega_1$. Two of them are just $p_1^*\omega_1 + p_2^*\omega_1$. Another six terms break into two triples:
\[
\frac{1}{12}(\Ad_bp_1^*\theta, [p_2^*\theta, p_2^*\theta]) + \frac{1}{12}(p_2^*\theta, [\Ad_b p_1^*\theta, p_2^*\theta]) +\frac{1}{12}(p_2^*\theta, [p_2^*\theta, \Ad_b p_1^*\theta]) = \frac{1}{4}(p_1^*\theta, [p_2^*\overline{\theta}, p_2^*\overline{\theta}])
\]
and similarly for the other triple. We see that these six terms cancel with the terms in $\ddr\phi$.
\end{proof}

To summarize, we have constructed a multiplicative torsor over $\Omega^{2, cl}$ on $G$, such that the induced $\Omega^{2, cl}$ torsor on $[G/G]$ is represented by the differential forms $(\omega_0, \omega_1)$.

\section{AKSZ topological field theory}

\label{sect:AKSZ}

\subsection{Lagrangian correspondences}

Consider the symmetric monoidal 1-category $\LC_n$ whose objects are $(n-1)$-shifted symplectic stacks and morphisms $X\rightarrow Y$ are Lagrangian correspondences $X\leftarrow L\rightarrow Y$. \autoref{thm:lagrcomp} defines a composition on this category. To make the composition well-defined, we consider Lagrangian correspondences only up to an isomorphism. Let us spell out the notion of isomorphisms explicitly.

Two Lagrangians $f_i\colon L_i\rightarrow X\times \overline{Y}$ are isomorphic if we have an isomorphism of stacks $g\colon L_1\rightarrow L_2$ together with a commutative diagram
\[
\xymatrix{
L_1 \ar^{f_1}[r] \ar_{g}[d] & X\times\overline{Y} \\
L_2 \ar_{f_2}[ur] &
}
\]

We have a loop at the origin in $\Omega^{2, cl}(L_2, n)$ given by the path \[0\leadsto (f_2^*\omega_X - f_2^*\omega_Y) \leadsto (g^*f_1^*\omega_X - g^*f_1^*\omega_Y) \leadsto 0\] which we require to be contractible.

The symmetric monoidal structure on $\LC_n$ is given by the Cartesian product of symplectic stacks.

Recall also the symmetric monoidal 1-category of cobordisms $\Cob^{or}_n$ whose objects are closed oriented $(n-1)$-manifolds and morphisms are oriented cobordisms between them.

Given a topological space $M$ we can assign to it a constant stack $M_{\B}$. Let us recall the following two theorems (\cite[Theorem 2.5]{PTVV} and \cite[Section 3.1.2]{Ca}).

\begin{thm}
Let $M$ be a closed oriented $(n-1)$-manifold and $X$ an $m$-shifted symplectic stack. Then the derived mapping stack $\Map_{\dSt}(M_{\B}, X)$ carries a natural $(m-n+1)$-shifted symplectic structure.
\label{thm:PTVVAKSZ}
\end{thm}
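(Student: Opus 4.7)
The plan is to follow the AKSZ construction: pull back the symplectic form on $X$ along the evaluation map, then integrate over $M$ against the fundamental class provided by the orientation.

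\emph{Setup.} I would write $\mathcal{M}:=\Map_{\dSt}(M_{\B}, X)$, with projection $p\colon M_{\B}\times \mathcal{M}\to\mathcal{M}$ and evaluation $\ev\colon M_{\B}\times\mathcal{M}\to X$. First I would pull the given $\omega\in\Omega^{2,cl}(X,m)$ back to obtain $\ev^*\omega\in\Omega^{2,cl}(M_{\B}\times\mathcal{M}, m)$.

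\emph{Transgression.} The orientation on $M$ provides a fundamental class $[M]$, i.e.\ a morphism of complexes $\int_{M}\colon C^*(M,k)\to k[-(n-1)]$. Since $M_{\B}$ is a constant stack and both $\Omega^p(-,n)$ and $\Omega^{p,cl}(-,n)$ turn colimits into limits, capping with $[M]$ extends $k$-linearly to a pushforward
\[\int_{M}\colon \Omega^{p,cl}(M_{\B}\times \mathcal{M},\,m)\longrightarrow \Omega^{p,cl}(\mathcal{M},\,m-n+1).\]
Concretely, after choosing a triangulation the left-hand side is $C^*(M,k)\otimes_{k}\Omega^{p,cl}(\mathcal{M},m)$ up to natural equivalence, and one pairs with $[M]$. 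I would then set $\widetilde{\omega}:=\int_{M}\ev^*\omega$, a closed two-form of degree $m-n+1$ on $\mathcal{M}$.

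\emph{Nondegeneracy.} The key remaining step is to show the induced map $\widetilde{\omega}^\sharp\colon \bT_{\mathcal{M}}\to\bL_{\mathcal{M}}[m-n+1]$ is an isomorphism. I would use the standard identification $\bT_{\mathcal{M}}\cong p_*\ev^*\bT_X$ and exhibit $\widetilde{\omega}^\sharp$ as the composite
\[p_*\ev^*\bT_X\;\xrightarrow{\,p_*\ev^*\omega^\sharp\,}\;p_*\ev^*\bL_X[m]\;\xrightarrow{\,\sim\,}\;(p_*\ev^*\bT_X)^\vee[m-n+1],\]
where the second arrow combines fibrewise duality $\ev^*\bL_X\cong(\ev^*\bT_X)^\vee$ (using perfectness of $\bL_X$) with Poincar\'e duality $C^*(M,k)\simeq C^*(M,k)^\vee[-(n-1)]$ for the closed oriented manifold $M$. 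Nondegeneracy of $\omega$ makes the first arrow an equivalence; the orientation makes the second one, so $\widetilde{\omega}$ is symplectic.

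\emph{Main obstacle.} The delicate points are (i) pinning down the identification $\bT_{\mathcal{M}}\cong p_*\ev^*\bT_X$ in the derived setting and checking that $\widetilde{\omega}^\sharp$ really corresponds to $p_*\ev^*\omega^\sharp$ twisted by Poincar\'e duality, and (ii) making sense of $\int_{M}$ on the closed-form side, given the auxiliary variable $u$ in the definition of $\Omega^{p,cl}$. For (ii), compatibility follows because $\ev$ and $p$ commute with $\ddr$ and the fundamental class is $u$-linear; for (i), one uses that $M_{\B}$ is a colimit of points, so that mapping into $X$ and taking tangent complexes commutes with the corresponding limits and with pullback along $\ev$. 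These issues are the heart of the PTVV transgression argument and once they are settled the result is immediate.
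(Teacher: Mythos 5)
The paper does not prove this statement itself; it is quoted directly from \cite[Theorem 2.5]{PTVV}, so there is no internal proof to compare against. Your sketch correctly reconstructs the strategy of the cited PTVV transgression argument --- pull back along $\ev$, integrate against the fundamental class supplied by the orientation (using that $\Omega^{p,cl}(-,m)$ turns the colimit presentation of $M_{\B}$ into a limit), and deduce nondegeneracy from $\bT_{\mathcal{M}}\simeq p_*\ev^*\bT_X$ together with Poincar\'e duality --- and the two ``delicate points'' you flag are indeed exactly where the real work in \cite{PTVV} lies.
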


\begin{thm}
Let $M$ be a compact oriented $n$-manifold. Then the restriction map \[\Map_{\dSt}(M_{\B}, X)\rightarrow \Map_{\dSt}((\partial M)_{\B}, X)\] carries a natural Lagrangian structure.
\label{thm:CalAKSZ}
\end{thm}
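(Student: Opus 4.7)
The plan is to follow the AKSZ construction, building the Lagrangian structure by ``pullback and integration'' along the oriented manifold $M$, exactly parallel to how the shifted symplectic form of \autoref{thm:PTVVAKSZ} is built by pullback and integration along the closed manifold $\partial M$. Let $\ev\colon M_\B\times \Map_{\dSt}(M_\B, X)\to X$ denote the evaluation and $r\colon \Map_{\dSt}(M_\B, X)\to \Map_{\dSt}((\partial M)_\B, X)$ the restriction. The orientation on $M$ (and its induced orientation on $\partial M$) supplies integration maps
\[
\int_M\colon \Gamma(M_\B,-)\to k[-n],\qquad \int_{\partial M}\colon \Gamma((\partial M)_\B,-)\to k[-n+1],
\]
which extend to closed forms on the mapping stacks and which satisfy the Stokes-type relation $\d\circ \int_M = \int_{\partial M}\circ\, r^*$ (more precisely, a coherent homotopy between the two, since integration of a closed form lands in closed forms up to a specified homotopy controlled by the boundary).

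First I would define the candidate isotropic form as
\[
h := \int_M \ev^*\omega_X\ \in\ \Omega^{2,cl}(\Map_{\dSt}(M_\B, X),\, m-n),
\]
where $\omega_X$ is the $m$-shifted symplectic form on $X$. By functoriality of integration, the corresponding construction using $\partial M$ instead of $M$ reproduces the PTVV form $\omega_{\partial M}\in\Omega^{2,cl}(\Map_{\dSt}((\partial M)_\B,X),\, m-n+1)$ from \autoref{thm:PTVVAKSZ}. The Stokes homotopy then upgrades to the key identity
\[
(\d+u\ddr)\, h \;=\; r^*\omega_{\partial M},
\]
which is exactly the data of an isotropic structure on $r$ in the sense of the definition preceding \autoref{thm:lagrcomp}. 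This is the easy half of the argument: everything is formal once integration is set up with its compatibilities with $\d$ and $\ddr$.

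The main obstacle is verifying non-degeneracy, i.e.\ that $\bT_r\to\bL_{\Map(M_\B,X)}[m-n]$ is an isomorphism. At a point $\phi\colon M\to X$, standard derived cotangent computations identify
\[
\bT_{\Map(M_\B,X),\phi}\cong \Gamma(M,\phi^*\bT_X),\qquad \bT_{\Map((\partial M)_\B,X),r(\phi)}\cong \Gamma(\partial M,\phi^*\bT_X),
\]
and the map between them is restriction, whose fiber is the relative cohomology $\Gamma(M,\partial M;\phi^*\bT_X)$. Using the symplectic duality $\bT_X\cong\bL_X[m]$ and the orientation on $M$, Poincaré--Lefschetz duality in the derived setting gives a perfect pairing
\[
\Gamma(M,\partial M;\phi^*\bT_X)\otimes \Gamma(M,\phi^*\bT_X)\longrightarrow k[m-n],
\]
so that $\Gamma(M,\partial M;\phi^*\bT_X)$ is the $(m-n)$-shifted dual of $\Gamma(M,\phi^*\bT_X)=\bT_{\Map(M_\B,X)}$, i.e.\ of $\bL_{\Map(M_\B,X)}[-(m-n)]$ after dualizing.

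Finally, I would verify that the morphism $\bT_r\to\bL_{\Map(M_\B,X)}[m-n]$ induced by $h$ is \emph{exactly} this Poincaré--Lefschetz pairing. This is a local computation: $h$ was built by pulling back $\omega_X$ and integrating along $M$, so its underlying map on tangent complexes is the composition
\[
\Gamma(M,\phi^*\bT_X)\otimes \Gamma(M,\phi^*\bT_X)\xrightarrow{\omega_X}\Gamma(M,\phi^*\bL_X)\otimes \Gamma(M,\phi^*\bT_X)\xrightarrow{\int_M}k[m-n],
\]
which under the duality $\bT_X\cong\bL_X[m]$ is precisely cap product with the relative fundamental class of $(M,\partial M)$. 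The long exact sequence of the pair then turns the non-degeneracy into the pullback square $\bT_r\to 0$ over $r^*\bT_{\Map((\partial M)_\B,X)}\to \bL_{\Map(M_\B,X)}[m-n]$ required by the definition of a Lagrangian structure. The whole argument is parallel to, and refines, the closed-manifold case of PTVV, with relative Poincaré duality replacing ordinary Poincaré duality.
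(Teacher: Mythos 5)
The paper does not prove this statement: it is quoted as an external result from Calaque (cited as \cite[Section 3.1.2]{Ca}), so there is no in-paper argument to compare against. Your sketch follows exactly the expected route of that reference --- the isotropic structure is $\int_M \ev^*\omega_X$ with the Stokes homotopy supplying $(\d+u\ddr)h = r^*\omega_{\partial M}$, and non-degeneracy of $\bT_r\to\bL_{\Map(M_\B,X)}[m-n]$ is relative Poincar\'e--Lefschetz duality for $\Gamma(M,\partial M;\phi^*\bT_X)$ against $\Gamma(M,\phi^*\bT_X)$ after applying $\bT_X\cong\bL_X[m]$ --- which is the correct and standard argument; the only things left implicit are the $\cO$-compactness/orientation formalism of \cite{PTVV} needed to make the integration map and the identification $\bT_{\Map(M_\B,X)}\cong\Gamma(M,\phi^*\bT_X)$ precise, which is a matter of detail rather than a gap in the approach.
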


One can recover the previous theorem since $\partial M\cong\varnothing$ and Lagrangian maps from the stack $\Map_{\dSt}(M_{\B}, X)$ into the point equipped with a unique $(m-n+1)$-shifted symplectic structure are the same as $(m-n)$-shifted symplectic structures on $\Map_{\dSt}(M_{\B}, X)$.

Following \cite{Ca} we define the AKSZ topological field theory $Z_X\colon \Cob^{or}_n\rightarrow \LC_{m-n+2}$ whose value on any manifold $M$ is given by the derived mapping stack \[Z_X(M)=\Map_{\dSt}(M_{\B}, X).\] See \textit{loc. cit} for more details.

\subsection{Classical Chern--Simons theory}

We would like to interpret objects appearing in \cite{AMM} from the point of view of the AKSZ topological field theory.

The classifying stack $\B G$ carries a 2-shifted symplectic structure constructed from a nondegenerate $G$-invariant quadratic form $q\in\Sym^2(\mathfrak{g}^*)^G$. The field theory \[Z_{\B G}\colon \Cob^{or}_2\rightarrow \LC_2\] is the classical Chern--Simons theory truncated to dimensions 1 and 2. Let's consider some simple cobordisms.

\begin{itemize}
\item $M=S^1$. $Z_{\B G}(S^1) = [G/G]$ and it carries a 1-shifted symplectic structure.

\item $M$ is the disk. Then $Z_{\B G}(M)=[\pt/G]$ which carries a Lagrangian map \[[\pt/G]\rightarrow [G/G]\] given by the inclusion of the identity element.

\item $M=S^1\times I$ viewed as a cobordism from $\pt$ to $S^1\sqcup S^1$. We call \[D(G):=Z_{\B G}(S^1\times I)=[G/G]\] the double of $G$. The map $[G/G]\rightarrow [G/G]\times [G/G]$ given by $a\mapsto (a, a^{-1})$ is Lagrangian.

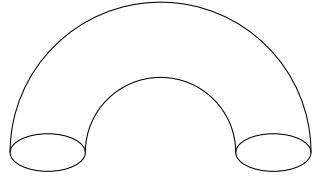
\begin{figure}[h]
\centering
\begin{tikzpicture}
  \draw (0,0) arc (0:180:2);
  \draw (-0.5,0) ellipse (0.5 and 0.25);
  \draw (-3.5, 0) ellipse (0.5 and 0.25);
  \draw (-1,0) arc (0:180:1);
\end{tikzpicture}
\caption{The double $D(G)$.}
\end{figure}

\item $M$ is a pair of pants viewed as a cobordism from $S^1\sqcup S^1$ to $S^1$. Then \[Z_{\B G}(M) = [(G\times G)/G].\] The AKSZ field theory then gives a Lagrangian correspondence
\[
\xymatrix{
& [(G\times G)/G] \ar[dl] \ar[dr] & \\
[G/G] & & [G/G] \times [G/G]
}
\]

\begin{figure}[h]
\centering
\begin{tikzpicture}
  \draw (0.05,0.1) .. controls (1, 1) .. (1, 2);
  \draw (0.4,0) ellipse (0.4 and 0.2);
  \draw (1.4,2) ellipse (0.4 and 0.2);
  \draw (1.8, 2) .. controls (1.8, 1) .. (2.75, 0.1);
  \draw (2.4, 0) ellipse (0.4 and 0.2);
  \draw (0.75, -0.1) .. controls (1.2, 0.4) and (1.6, 0.4) .. (2.05, -0.1);
\end{tikzpicture}
\caption{Fusion}
\end{figure}

For example, for any Lagrangian $L\rightarrow [G/G]\times [G/G]$ we get a Lagrangian \[L\times_{[(G\times G)/G]}([G/G]\times [G/G])\rightarrow [G/G]\] which is called the \textit{internal fusion} of $L$.

\item $M$ is a 2-torus with a disk removed. We can view $M$ as a composition of the cylinder with a pair of pants, so $Z_{\B G}(M)$ is the fusion of the double $D(G)$. Explicitly, \[Z_{\B G}(M)=[(G\times G)/G]\] with a Lagrangian morphism $Z_{\B G}(M)\rightarrow [G/G]$ given by $(a, b)\mapsto aba^{-1}b^{-1}$.

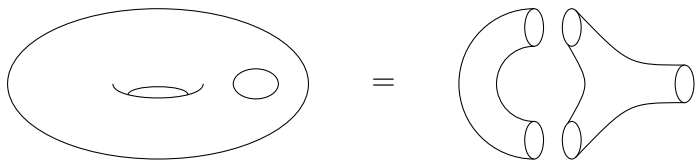
\begin{figure}[h]
\centering
\begin{tikzpicture}
  \draw (0,0) ellipse (2 and 1);
  \draw (-0.6, 0) .. controls (-0.6, -0.25) and (0.6, -0.25) .. (0.6, 0);
  \draw (-0.4, -0.15) .. controls (-0.4, 0) and (0.4, 0) .. (0.4, -0.15);
  \draw (1.3, 0) ellipse (0.3 and 0.2);
  
  \draw (3, 0) node {=};

  \draw (5, 1) arc (90:270:1);
  \draw (5, 0.75) ellipse (0.125 and 0.25);
  \draw (5, -0.75) ellipse (0.125 and 0.25);
  \draw (5, 0.5) arc (90:270:0.5);

  \draw (5.5, 0.75) ellipse (0.125 and 0.25);
  \draw (5.5, -0.75) ellipse (0.125 and 0.25);
  \draw (5.55, 0.98) .. controls (6.25, 0.25) .. (7, 0.25);
  \draw (5.55, -0.98) .. controls (6.25, -0.25) .. (7, -0.25);
  \draw (7, 0) ellipse (0.125 and 0.25);
  \draw (5.45, 0.52) .. controls (5.75, 0) .. (5.45, -0.52);
\end{tikzpicture}
\caption{Punctured torus as a fusion of $D(G)$}
\end{figure}

\item $M$ is a closed oriented surface of genus $g$. We can split it as a composition of $M$ with a disk removed $M'$ and a disk. This allows us to compute
\[Z_{\B G}(M) = Z_{\B G}(M')\times_{[G/G]} [\pt/G].\]

In other words, the character variety of $M$ with its symplectic structure obtained by the AKSZ construction can be also obtained from a quasi-Hamiltonian reduction of \[Z_{\B G}(M') = [(\underbrace{G\times ... \times G}_{\text{$2g$ times}})/G].\]

\begin{figure}[h]
\centering
\begin{tikzpicture}
  \draw (0,0) .. controls (0,1) and (1.3, 1) .. (1.8, 0.7)
  .. controls (2.3, 0.6) .. (2.8, 0.7)
  .. controls (3.3, 1) and (4.6, 1) .. (4.6, 0)
  .. controls (4.6, -1) and (3.3, -1) .. (2.8, -0.7)
  .. controls (2.3, -0.6) .. (1.8, -0.7)
  .. controls (1.3, -1) and (0, -1) .. (0, 0);
  \draw (0.5, 0) .. controls (0.5, -0.25) and (1.7, -0.25) .. (1.7, 0);
  \draw (0.7, -0.15) .. controls (0.7, 0) and (1.5, 0) .. (1.5, -0.15);
  \draw (3, 0) .. controls (3, -0.25) and (4.2, -0.25) .. (4.2, 0);
  \draw (3.2, -0.15) .. controls (3.2, 0) and (4, 0) .. (4, -0.15);
  \draw (2.3, -1.2) node {$M$};
  
  \draw (5.4, 0) node {=};

  \draw (6,0) .. controls (6,1) and (7.3, 1) .. (7.8, 0.7)
  .. controls (8.3, 0.6) .. (8.8, 0.7)
  .. controls (9.3, 1) and (10.6, 1) .. (10.8, 0.4);
  \draw (10.8, 0.4) .. controls (10.9, 0.2) .. (11, 0.2);
  \draw (11, -0.2) .. controls (10.9, -0.2) .. (10.8, -0.4);
  \draw (10.8, -0.4) .. controls (10.6, -1) and (9.3, -1) .. (8.8, -0.7)
  .. controls (8.3, -0.6) .. (7.8, -0.7)
  .. controls (7.3, -1) and (6, -1) .. (6, 0);
  \draw (6.5, 0) .. controls (6.5, -0.25) and (7.7, -0.25) .. (7.7, 0);
  \draw (6.7, -0.15) .. controls (6.7, 0) and (7.5, 0) .. (7.5, -0.15);
  \draw (9, 0) .. controls (9, -0.25) and (10.2, -0.25) .. (10.2, 0);
  \draw (9.2, -0.15) .. controls (9.2, 0) and (10, 0) .. (10, -0.15);
  \draw (11, 0) ellipse (0.1 and 0.2);
  \draw (11, -0.2) arc (-90:90:0.2);
  \draw (8.3, -1.2) node {$M'$};
\end{tikzpicture}
\end{figure}

\end{itemize}

\section{Computations of the symplectic forms}

The aim of this section is to compute the 1-shifted symplectic form on $[G/G]$ via the AKSZ construction and compare it to the form $\omega_0 + u\omega_1$ defined previously.

\subsection{Differential forms on quotient stacks}

Let us describe the isomorphism \[\Gamma(\B G, \Omega^n)\rightarrow\Tot\Gamma(\B_\bullet G, \Omega^n).\] This can be thought of as the Dolbeault version of the Chern-Weil homomorphism \[\Sym^n(\mathfrak{g}^*)^G\rightarrow \mH^{n,n}(\B_\bullet G).\]

This weak equivalence can be found e.g. in \cite{B}, where it is written as a zig-zag of quasi-isomorphisms; we will need a more explicit single quasi-isomorphism.

More generally, consider a smooth scheme $X$ with a $G$-action. Our goal is to write down the descent quasi-isomorphism
\[\Gamma([X/G], \Omega^n)\rightarrow \Gamma(X/G, \Omega^n)\]
for low degrees $n$.

As $\bL_{[X/G]}\cong (\bL_X\rightarrow \mathfrak{g}^*\otimes \cO_X)$, we can identify
\[\Gamma([X/G], \Omega^n)\cong \Sym^n(\Omega^1(X)[1]\oplus \mathfrak{g}^*\otimes \cO(X))^G[-n].\]

\subsubsection{} Let us begin with $n=0$. Then we want to see that the inclusion
\[
\xymatrix{
0 \ar[r] & \cO(X)^G \ar[r] \ar[d] & 0 \ar[r] \ar[d] & 0 \\
0 \ar[r] & \cO(X) \ar[r] & \cO(X\times G) \ar[r] & ...
}
\]
is a quasi-isomorphism. This immediately follows from the fact that the functor of $G$-invariants is exact for a reductive group.

\subsubsection{} Let's move on to the case $n=1$. Then we need to produce a quasi-isomorphism
\[
\xymatrix{
0 \ar[r] & \Omega^1(X)^G \ar[r] \ar[d] & (\cO(X)\otimes \mathfrak{g}^*)^G \ar[r] \ar[d] & 0\ar[r] \ar[d] & 0\\
0 \ar[r] & \Omega^1(X) \ar[r] & \Omega^1(X\times G) \ar[r] & \Omega^1(X\times G\times G) \ar[r] & ...
}
\]

As before, we have a quasi-isomorphism
\[
\xymatrix{
0 \ar[r] & \Omega^1(X)^G \ar[r] \ar[d] & 0 \ar[r] \ar[d] & 0 \\
0 \ar[r] & \Omega^1(X) \ar[r] & \Omega^1(X)\otimes\cO(G) \ar[r] & ...
}
\]

Similarly, we have a quasi-isomorphism
\[
\xymatrix{
0 \ar[r] & (\cO(X)\otimes\mathfrak{g}^*)^G \ar[r] \ar[d] & 0 \ar[r] \ar[d] & 0 \\
0 \ar[r] & \cO(X)\otimes \mathfrak{g}^* \ar[r] & \cO(X)\otimes\cO(G)\otimes\mathfrak{g}^* \ar[r] & ...
}
\]

Note, that $G$ acts on both $X$ and $\mathfrak{g}^*$.

Finally, we can identify $\mathfrak{g}^*$ with right-invariant one-forms $\Omega^1(G)^G$ via the map
\[\mathfrak{g}^*\rightarrow \Omega^1(G)^G\]
sending
\[\phi\mapsto \phi(\overline{\theta}).\]

Expanding the $G$-invariants, we arrive at a complex whose $n$-th term is
\[\Omega^1(X)\otimes \cO(G)^{\otimes n} \oplus \bigoplus_k \cO(X)\otimes \cO(G)\otimes ...\otimes \Omega^1(G)\otimes \cO(G)\otimes ...,\]
where in the second sum the $k$-th term in the tensor product is $\Omega^1(G)$. This space is isomorphic to $\Omega^1(X\times G^{\times n})$ and the complex is exactly the \v{C}ech complex $\Omega^1(X/G)$.

Therefore, the map $\Gamma([X/G], \Omega^1)\rightarrow \Gamma(X/G, \Omega^1)$ has the following components:
\begin{itemize}
\item The map $\Omega^1(X)^G\rightarrow \Omega^1(X)$ is the standard inclusion,

\item The map $(\cO(X)\otimes \mathfrak{g}^*)^G\rightarrow \Omega^1(X\times G)$ is $t\mapsto -t(p_2^*\overline{\theta})$, where $p_2\colon X\times G\rightarrow G$ is the projection.
\end{itemize}

\subsubsection{} Finally, let's discuss the case $n=2$. We have to give a quasi-isomorphism
\[
\xymatrix{
0 \ar[r] & \Omega^2(X)^G \ar[r] \ar[d] & (\Omega^1(X)\otimes \mathfrak{g}^*)^G \ar[r] \ar[d] & (\cO(X)\otimes \Sym^2(\mathfrak{g}^*))^G\ar[r] \ar[d] & 0\\
0 \ar[r] & \Omega^2(X) \ar[r] & \Omega^2(X\times G) \ar[r] & \Omega^2(X\times G\times G) \ar[r] & ...
}
\]

Let's first include the complex
\[0\rightarrow \Omega^2(X)^G\rightarrow (\Omega^1(X)\otimes \mathfrak{g}^*)^G\rightarrow (\cO(X)\otimes \Sym^2(\mathfrak{g}^*))^G\rightarrow 0\]
into the complex
\[
\xymatrix@R-2pc{
0 \ar[r] & \Omega^2(X)^G \ar[r] \ar[ddr] & (\Omega^1(X)\otimes \mathfrak{g}^*)^G \ar[r] & (\cO(X)\otimes \Sym^2(\mathfrak{g}^*))^G \\
& & \oplus & \oplus \\
& & (\cO(X)\otimes \wedge^2(\mathfrak{g}^*))^G \ar^{\id}[r] & (\cO(X)\otimes \wedge^2(\mathfrak{g}^*))^G
}
\]

Here the map $\Omega^2(X)^G\rightarrow (\cO(X)\otimes\wedge^2(\mathfrak{g}^*))^G$ is given by the contraction of a two-form on $X$ with two vector fields generating the $G$-action. It is immediate that the inclusion is a quasi-isomorphism. This follows from the fact that an element $\omega\in\Omega^2(X)^G$ is closed in the old complex if $\iota_{a(v)}\omega = 0$ for $a(v)$ the action vector field of $v\in\mathfrak{g}$, while it is closed in the new complex if $\iota_{a(v)}\omega = 0$ and $\iota_{a(v)}\iota_{a(w)}\omega = 0$. Clearly, these two conditions are equivalent.

Let us combine $\Sym^2(\mathfrak{g}^*)\oplus \wedge^2(\mathfrak{g}^*)\cong \mathfrak{g}^*\otimes \mathfrak{g}^*$. As before, we can expand the $G$-invariants in all terms to obtain a complex of the form
\[
\xymatrix@R-2pc{
0 \ar[r] & \Omega^2(X)\otimes \cO(G^{\times \bullet}) \ar[r] \ar[ddr] & \Omega^1(X)\otimes \cO(G^{\times \bullet})\otimes\mathfrak{g}^* \ar[r] & \cO(X)\otimes \cO(G^{\times \bullet})\otimes \mathfrak{g}^*\otimes\mathfrak{g}^* \\
& & \oplus &  \\
& & \cO(X)\otimes \cO(G^{\times \bullet})\otimes \wedge^2(\mathfrak{g}^*) \ar[uur] & 
}
\]

As before, we have identifications $\Omega^n(G)^G\cong \wedge^n(\mathfrak{g}^*)$ as $G$-representations with respect to the left $G$-action. Moreover, we now have three $G$-actions on $\Omega^2(G\times G)$: the left, right and middle actions. The subspace of two-forms on $G\times G$ invariant with respect to the middle and right $G$-actions is isomorphic to $\wedge^2(\mathfrak{g}^*\oplus \mathfrak{g}^*)$. If we further restrict to two-forms being a product of two one-forms on the two $G$-factors, we get the $G$-representation $\mathfrak{g}^*$ that appears in our complex. Finally, expanding all these $G$-invariants we recover the \v{C}ech complex $\Omega^2(X/G)$.

Let \[\sum_i e_i\otimes e^i\in \mathfrak{g}^*\otimes \mathfrak{g}\]
be the canonical element which is the image of the identity operator under $\End(\mathfrak{g})\cong \mathfrak{g}^*\otimes \mathfrak{g}$.

The map $\Gamma([X/G], \Omega^2)\rightarrow \Gamma(X/G, \Omega^2)$ has the following components:
\begin{itemize}
\item The map $\Omega^2(X)^G\rightarrow \Omega^2(X)$ is the standard inclusion,

\item The map $(\mathfrak{g}^*\otimes_k \Omega^1(X))^G\rightarrow \Omega^2(X\times G)$ is given by
\begin{equation}
t\mapsto \sum_i t(e_i)\wedge e^i(p_2^*\overline{\theta}) - \frac{1}{2}\sum_{i,j}\iota_{a(e_j)} t(e_i) e^i(p_2^*\overline{\theta})\wedge e^j(p_2^*\overline{\theta}),
\label{twoformcech}
\end{equation}
where $t\in(\mathfrak{g}^*\otimes_k \Omega^1(X))^G$,

\item The map $(\Sym^2(\mathfrak{g}^*)\otimes_k \cO(X))^G\rightarrow \Omega^2(X\times G\times G)$ is
\[(-,-)\mapsto -\frac{1}{2}(p_2^*\overline{\theta}, \Ad_{g_1}p_3^*\overline{\theta}),\]
where $(-,-)$ is the symmetric bilinear form associated to the quadratic form on Lie algebra \[q(-)\in \Sym^2(\mathfrak{g}^*)\otimes_k \cO(X);\] in particular, we have $(v, v) = 2q(v)$ for $v\in\mathfrak{g}$.
\end{itemize}

\subsubsection{}
Let us apply the considerations above to the case $X = \pt$. Then the bilinear form $(-,-)\in\Sym^2(\mathfrak{g}^*)^G$ is sent to 
\[\phi=-\frac{1}{2}(p_1^*\theta, p_2^*\overline{\theta}) \in\Omega^2(G\times G).\]

We have a diagram
\[
\xymatrix{
\Omega^{2, cl}(\B G) \ar^{\sim}[r] \ar_{\sim}[d] & \Omega^{2, cl}(\B_\bullet G) \ar[d] \\
\Omega^2(\B G) \ar^{\sim}[r] & \Omega^2(\B_\bullet G)
}
\]

Therefore, the map $\Omega^{2, cl}(\B_\bullet G)\rightarrow \Omega^2(\B_\bullet G)$ is a quasi-isomorphism. In other words, the space of keys of a two-form is contractible.

In \autoref{lemma:chimult} we have shown that $\phi - u\omega_1\in\Omega^{2, cl}(\B_\bullet G)$ which we take to be the image of \[q\in \Sym^2(\mathfrak{g}^*)^G\cong \Omega^{2, cl}(\B G)\] in $\Omega^{2, cl}(\B_\bullet G)$.

\subsection{Symplectic structure on $[G/G]$}

Given the explicit description of the isomorphism $\Gamma(\B G, \Omega^2)\cong \Tot\Gamma(\B_\bullet G, \Omega^2)$, let us now compute the integral transform of the symplectic structure on $\B G$ along
\[
\xymatrix{
& [G/G]\times S^1 \ar_{p}[dl] \ar^{\ev}[dr] & \\
[G/G] && \B G.
}
\]

We can think of the map $\ev$ as a self-homotopy $h$ of the map $[G/G]\rightarrow \B G$ classifying the $G$-torsor $G\rightarrow [G/G]$. The self-homotopy $h$ induces a chain map $\overline{h}\colon \Omega^2(\B G, 2)\rightarrow \Omega^2([G/G], 1)$, which coincides with $p_* \ev^*$. Although we could obtain the answer in this way, we will give a more straightforward computation of $p_* \ev^*$ applicable in more general situations. We will use simplicial techniques, a good introduction is \cite{GJ}.

We use the smallest model of the simplicial circle $S^1$ which is generated by one 0-simplex $q$ and one 1-simplex $\tau$ \cite{Lo}. The simplicial set $S^1_\bullet$ has the following components in low degrees:
\[S^1_0 = \{q\}, \quad S^1_1 = \{s_0 q, \tau\}, \quad S^1_2 = \{s_0^2 q, s_1\tau, s_0\tau\}, ...\]

We will also need the simplices of $\Delta^1$:
\[\Delta^1_0 = \{p_0, p_1\}, \quad \Delta^1_1 = \{s_0 p_0, s_0p_1, I\}, \quad \Delta^1_2 = \{s_0^2 p_0, s_0^2 p_1, s_1I, s_0 I\}, ...\]

Let's begin by describing the isomorphism $\Map_{\dSt}(S^1_{\B}, \B G)\cong [G/G]$. Since $S^1$ is a finite simplicial set, this isomorphism can be obtained by a sheafification of the isomorphism $\Map(S^1_\bullet, \B_\bullet G)\cong G/G$ of simplicial schemes. We will only need the explicit map on 0- and 1-simplices.

First, $\Hom(S^1_\bullet, \B_\bullet G)\cong G$ since a map $S^1_\bullet\rightarrow \B_\bullet G$ is necessarily trivial on 0-simplices and is uniquely determined on $\tau$, the only other nondegenerate simplex.

Next, the isomorphism $\Hom(S^1_\bullet\times \Delta^1, \B_\bullet G)$ is seen in the following way. A map \[f\colon S^1_\bullet\times\Delta^1\rightarrow \B_\bullet G\] is again trivial on 0-simplices and is uniquely determined by the values on nondegenerate 1-simplices subject to the associativity conditions coming from the face maps on 2-simplices. The nondegenerate 1-simplices in $S^1_\bullet\times \Delta^1_\bullet$ are
\[\{s_0q\}\times\{I\}, \quad \{\tau\}\times \{s_0p_0\}, \quad \{\tau\}\times \{s_0p_1\}, \quad \{\tau\}\times \{I\}.\]

We denote the corresponding values of $G$ by
\begin{align*}
g_{s_0q, I}&=f(\{s_0q\}\times\{I\}), \quad g_{\tau, s_0p_0}=f(\{\tau\}\times \{s_0p_0\},)\\
g_{\tau, s_0p_1}&=f(\{\tau\}\times \{s_0p_1\}), \quad g_{\tau, I}=f(\{\tau\}\times \{I\}).
\end{align*}

For any two-simplex $y\in \B_2 G$ we have $d_2(y) d_0(y) = d_1(y)$. Applying it to $f(\{s_1\tau\}\times \{s_0 I\})$ and $f(\{s_0\tau\}\times \{s_1 I\})$ we get the relations
\[g_{\tau, I} = g_{\tau, s_0p_0} g_{s_0 q, I},\quad g_{\tau, I} = g_{s_0q, I} g_{\tau, s_0p_1}.\]

Therefore, $\Hom(S^1_\bullet\times \Delta^1_\bullet, \B_\bullet G)\cong G\times G$ via
\[f\mapsto (f(\{\tau\}\times \{s_0p_0\}), f(\{s_0 q\}\times \{I\})).\]

The face maps $G\times G\rightrightarrows G$ are
\[d_0(g, h) = h^{-1} gh, \quad d_1(g, h) = g.\]

Once we know the isomorphism $\Map(S^1_\bullet, \B_\bullet G)\cong G/G$ we can easily write down the evaluation map \[\ev\colon G/G\times S^1_\bullet\rightarrow \B_\bullet G.\]

The maps \[(G/G)_n\times \{s_0^n q\}\rightarrow G^{\times n}\] are simply projections on the last $n$ components of $(G/G)_n=G^{\times(n+1)}$. The map \[G\times G\times \{\tau\}\rightarrow G\] is $(g, h)\mapsto gh$ (this is $g_{\tau, I}$ in the previous notation). The map on 2-simplices \[G\times G\times G\times S^1_2\rightarrow G\times G\] is then uniquely determined from the simplicial identities. The map
\[\ev_{s_0\tau}\colon G\times G\times G\times \{s_0\tau\}\rightarrow G\times G\]
is
\[(g, h_1, h_2)\mapsto (h_1, h_1^{-1}gh_1h_2).\]
The map
\[\ev_{s_1\tau}\colon G\times G\times G\times \{s_1\tau\}\rightarrow G\times G\]
is
\[(g, h_1, h_2)\mapsto (gh_1, h_2).\]

The pullbacks of $\phi$ are
\begin{align*}
\ev_{s_0\tau}^*\phi &= -\frac{1}{2}(h_1^*\theta, \Ad_{h_1} g^*\overline{\theta}) - \frac{1}{2}(h_1^*\theta, \Ad_{g^{-1}h_1} h_1^*\overline{\theta}) - \frac{1}{2}(h_1^*\theta, \Ad_{h_1^{-1}g^{-1}h_1} h_2^*\overline{\theta})\\
&= -\frac{1}{2}(h_1^*\overline{\theta}, g^*\overline{\theta}) - \frac{1}{2}(h_1^*\overline{\theta}, \Ad_{g^{-1}} h_1^*\overline{\theta}) - \frac{1}{2}(h_1^*\overline{\theta}, \Ad_{h_1^{-1}g^{-1}} h_2^*\overline{\theta}) \\
\ev_{s_1\tau}^*\phi &= -\frac{1}{2}(\Ad_{h_1} g^*\theta, h_2^*\overline{\theta}) - \frac{1}{2}(h_1^*\overline{\theta}, h_2^*\overline{\theta}).
\end{align*}

Finally, the pushforward $p_*\ev^*\phi$ can be computed by composing the Eilenberg--MacLane map \cite[Definition 29.7]{May} with the integral along $S^1$. The result is
\[p_*\ev^*\phi = -s_1^* \ev^*_{s_0\tau}\phi + s_0^* \ev^*_{s_1\tau}\phi,\]
where $s_0$ and $s_1$ are the degeneracy maps $G\times G\rightrightarrows G\times G\times G$.

We obtain
\[p_*\ev^*\phi = -\frac{1}{2}(g^*\theta, h^*\overline{\theta}) + \frac{1}{2}(h^*\overline{\theta}, g^*\overline{\theta}) + \frac{1}{2}(h^*\overline{\theta}, \Ad_{g^{-1}} h^*\overline{\theta}).\]

So far we have computed the image of $\phi$ in $\Omega^2(G/G, 1)$ using the \v{C}ech presentation. Now we show that the \v{C}ech cocycle comes from the degree 1 two-form $\omega_0$ under the map \[\Omega^2([G/G], 1)\rightarrow \Omega^2(G/G, 1).\] Indeed, the image of $\omega_0$ under \eqref{twoformcech} is
\begin{align*}
&-\frac{1}{2}\sum_i(g^*\theta + g^*\overline{\theta}, e_i)\wedge h^*e^i(\overline{\theta}) + \frac{1}{4}\sum_{i,j}\iota_{e_j^R-e_j^L}(\theta+\overline{\theta}, e_i)\cdot e^i(h^*\overline{\theta})\wedge e^j(h^*\overline{\theta})\\
=&-\frac{1}{2}(g^*\theta + g^*\overline{\theta}, h^*\overline{\theta}) + \frac{1}{4}\sum_j (\Ad_{g^{-1}}(e_j) - \Ad_g(e_j), h_2^*\overline{\theta})\wedge e^j(h^*\overline{\theta})\\
=&-\frac{1}{2}(g^*\theta + g^*\overline{\theta}, h^*\overline{\theta}) + \frac{1}{2}(\Ad_gh^*\overline{\theta}, h^*\overline{\theta}).
\end{align*}

All the calculations in this section are summarized in the following theorem.

\begin{thm}
The integral transform of the quadratic form $q\in \Sym^2(\mathfrak{g}^*)^G\cong \Omega^2(\B G, 2)$ under
\[
\xymatrix{
& [G/G]\times S^1 \ar_{p}[dl] \ar^{\ev}[dr] & \\
[G/G] && \B G
}
\]
is equal to \[\omega_0 = -\frac{1}{2}(\theta + \overline{\theta}, -)\in \Omega^2([G/G], 1).\]
\end{thm}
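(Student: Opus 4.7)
The plan is to compute both sides of the claimed equality at the level of \v{C}ech cocycles on the simplicial scheme $G/G$, exploiting the quasi-isomorphism $\Gamma([G/G], \Omega^2)\xrightarrow{\sim}\Gamma(G/G, \Omega^2)$ worked out in the preceding subsection. Under the Chern--Weil style quasi-isomorphism of \S 5.1 with $X=\pt$, the quadratic form $q\in \Sym^2(\mathfrak{g}^*)^G$ is represented by the two-form \[\phi = -\tfrac{1}{2}(p_1^*\theta, p_2^*\overline{\theta}) \in \Omega^2(G\times G),\] as already noted. Hence to identify $p_*\ev^*q$ with $\omega_0$ it is enough to compute $p_*\ev^*\phi$ as a \v{C}ech 1-cocycle on $G/G$ and match it with the image of $\omega_0$ under the explicit formula \eqref{twoformcech}.

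The first real step is to give an explicit simplicial model of the evaluation map $\ev\colon (G/G)\times S^1_\bullet\rightarrow \B_\bullet G$. I would use the minimal simplicial model of the circle with a single nondegenerate 1-simplex $\tau$, together with its product with $\Delta^1$, and determine the map of simplicial schemes $\Map(S^1_\bullet,\B_\bullet G)\cong G/G$ by unwinding the associativity relations coming from the nondegenerate 2-simplices of $S^1_\bullet\times\Delta^1$. Once $\ev$ is pinned down, the formulas on nondegenerate 2-simplices of $(G/G)\times S^1$, namely on $s_0\tau$ and $s_1\tau$, take the concrete form \[(g, h_1, h_2)\mapsto (h_1,\, h_1^{-1}gh_1h_2),\qquad (g, h_1, h_2)\mapsto (gh_1,\, h_2),\] and pulling back $\phi$ along them is a direct calculation using $m^*\theta = \Ad_b p_1^*\theta + p_2^*\theta$ and $m^*\overline{\theta} = p_1^*\overline{\theta}+\Ad_{a^{-1}} p_2^*\overline{\theta}$.

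Next I would compute the pushforward along $p\colon (G/G)\times S^1\to G/G$ using the Eilenberg--MacLane shuffle map followed by integration along $S^1$, which in this very small model reduces to the antisymmetrized combination \[p_*\ev^*\phi = -s_1^*\ev^*_{s_0\tau}\phi + s_0^*\ev^*_{s_1\tau}\phi\] on $(G/G)_1 = G\times G$. Plugging in the pullback formulas from the previous step and simplifying with the $G$-invariance of $(-,-)$ yields an explicit two-form on $G\times G$ which is the \v{C}ech representative in $\Omega^2(G/G, 1)$.

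Finally, I would compare this \v{C}ech representative to the image of $\omega_0 = -\tfrac{1}{2}(\theta+\overline{\theta}, -)$ under the descent map \eqref{twoformcech} of \S 5.1. Expanding \eqref{twoformcech} using the contractions $\iota_{x^R-x^L}(\theta+\overline{\theta}) = \Ad_{a^{-1}}(x)-\Ad_{a}(x)$ produces the same combination of $(g^*\theta, h^*\overline{\theta})$, $(h^*\overline{\theta}, g^*\overline{\theta})$ and $(\Ad_{g^{-1}}h^*\overline{\theta}, h^*\overline{\theta})$ terms. The main obstacle is bookkeeping: keeping track of left versus right actions, the conventions for $\theta$ versus $\overline{\theta}$, the signs introduced by antisymmetrization in the Eilenberg--MacLane map, and the placement of $\Ad_g$ factors. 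Once the two expressions are written in the same basis of wedge products, the equality is a direct consequence of $G$-invariance of $(-,-)$ and the identity $\theta=\Ad_a\overline{\theta}$, completing the identification $p_*\ev^*q = \omega_0$.
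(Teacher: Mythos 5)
Your proposal follows the paper's own proof essentially verbatim: the same minimal simplicial circle, the same explicit evaluation maps on $s_0\tau$ and $s_1\tau$, the same Eilenberg--MacLane antisymmetrization $p_*\ev^*\phi = -s_1^*\ev^*_{s_0\tau}\phi + s_0^*\ev^*_{s_1\tau}\phi$, and the same final comparison with the image of $\omega_0$ under the descent formula \eqref{twoformcech}. The approach is correct and the remaining work is exactly the bookkeeping you identify.
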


Under the transgression map \[p_*\ev^*\colon \Omega^{2, cl}(\B_\bullet G, 2)\rightarrow \Omega^{2, cl}(G/G, 1)\] the form $\phi - u\omega_1$ is sent to $p_*\ev^*\phi + u\omega_1$. This coincides with the image of $\omega_0 + u\omega_1$ under the map $\Omega^{2, cl}([G/G], 1)\rightarrow \Omega^{2, cl}(G/G, 1)$.

\begin{thm}
The integral transform of the quadratic form $q\in \Sym^2(\mathfrak{g}^*)^G\cong \Omega^{2,cl}(\B G, 2)$ under
\[
\xymatrix{
& [G/G]\times S^1 \ar_{p}[dl] \ar^{\ev}[dr] & \\
[G/G] && \B G
}
\]
is equal to \[\omega_0+u\omega_1=-\frac{1}{2}(\theta+\overline{\theta},-) + \frac{u}{12}(\theta, [\theta, \theta])\in \Omega^{2,cl}([G/G], 1).\]
\end{thm}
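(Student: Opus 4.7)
The plan is to reduce the statement to the preceding (unclosed) theorem by computing everything in the \v{C}ech model. The commutative square
\[
\xymatrix{
\Omega^{2,cl}(\B G, 2) \ar[r] \ar[d]^\sim & \Omega^{2,cl}(\B_\bullet G, 2) \ar[d] \\
\Omega^2(\B G, 2) \ar[r]^\sim & \Omega^2(\B_\bullet G, 2)
}
\]
has three arrows that are known to be quasi-isomorphisms: the left vertical because $\ddr$ vanishes on $\Sym^2(\mathfrak{g}^*)^G$, the bottom horizontal by the preceding descent computation applied to $X=\pt$, and the right vertical because the ``space of keys'' of a two-form on $\B_\bullet G$ is contractible. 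Hence the top horizontal map is also a quasi-isomorphism, and by \autoref{lemma:chimult} the pair $(\phi, -u\omega_1)$ represents the quadratic form $q \in \Omega^{2,cl}(\B G, 2)$ at the \v{C}ech level.

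Next, I would compute the transgression $p_*\ev^*$ of this \v{C}ech cocycle. The $\phi$-component was already handled by the preceding theorem: the Eilenberg--MacLane shuffle and the simplicial $S^1$-integration formula $p_*\ev^*\phi = -s_1^*\ev_{s_0\tau}^*\phi + s_0^*\ev_{s_1\tau}^*\phi$ produce the two-form on $G\times G$ that realises the image of $\omega_0$ under $\Omega^2([G/G],1) \to \Omega^2(G/G,1)$. Applying the same simplicial machinery to the $u\omega_1$-component, one shows that $p_*\ev^*$ of the full cocycle $\phi - u\omega_1$ equals the \v{C}ech cocycle $(p_*\ev^*\phi,\, u\omega_1)$ in $\Omega^{2,cl}(G/G, 1)$, with the ``$+u\omega_1$'' piece now sitting in the lowest \v{C}ech degree on $(G/G)_0 = G$.

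Finally, I would identify the resulting cocycle with the image of $\omega_0 + u\omega_1 \in \Omega^{2,cl}([G/G], 1)$ under the descent quasi-isomorphism: for the weight $2$, degree $1$ part this is the content of the preceding theorem; for the weight $3$, degree $-1$ part the form $\omega_1$ is already $G$-invariant, so the descent inclusion $\Omega^{2,cl}([G/G],1) \to \Omega^{2,cl}(G/G,1)$ places it unchanged on $(G/G)_0 = G$. The main obstacle is the intermediate step: carefully applying the Eilenberg--MacLane integration to the mixed cocycle $\phi - u\omega_1$ and verifying that the contributions in distinct \v{C}ech degrees assemble correctly under the total differential $d + u\ddr$ on the target. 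Once this bookkeeping is in place, the theorem is a direct consequence of the preceding unclosed computation together with \autoref{lemma:chimult}.
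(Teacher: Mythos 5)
Your proposal follows essentially the same route as the paper: identify $q$ with the \v{C}ech cocycle $\phi - u\omega_1$ via \autoref{lemma:chimult} and the descent quasi-isomorphism, transgress with the Eilenberg--MacLane map so that the $\omega_1$-component survives unchanged in \v{C}ech degree zero, and match the result with the descent image of $\omega_0 + u\omega_1$ using the preceding theorem for the weight-two part. The only (immaterial) difference is the direction in which you run the commutative square: the paper takes descent for $\Omega^{2,cl}$ as the known input and deduces contractibility of the space of keys, whereas you assume the latter to deduce the former.
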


\subsection{Disk}

Consider the restriction map $\Map_{\dSt}(D_{\B}, \B G)\rightarrow \Map_{\dSt}(S^1_{\B}, \B G)$. It gives the map \[i\colon [\pt/G]\rightarrow [G/G]\] which is simply the inclusion of the identity element. The pullback of $\omega_0 + u\omega_1$ to $[\pt/G]$ is zero, which is the required Lagrangian structure: the space of isotropic structures on $i$ is $\Omega^{2, cl}(\B G, 0)$, which is contractible.

Explicitly, to check that $i$ is Lagrangian, we have to prove exactness of the sequence of $G$-representations
\[\mathfrak{g}[1]\rightarrow \mathfrak{g}[1]\oplus \mathfrak{g}\rightarrow \mathfrak{g}^*.\]
Indeed, the first map is the obvious inclusion and the second map is the nondegenerate pairing on $\mathfrak{g}$ applied to the second summand.

More generally, consider a conjugacy class $C\subset G$ of an element $g\in G$. Suppose $H$ is the stabilizer of $g$ under the adjoint action. Then $[C/G]\cong [\pt/H]$. The inclusion map $i\colon [\pt/H]\rightarrow [G/G]$ is again isotropic since $\Omega^{2, cl}(\B H, 0)$ is contractible ($H$ is reductive). The tangent complex $\bT_{[G/G]}$ restricted to $g\in G$ is
\[\mathfrak{g}\rightarrow \mathfrak{g}\] in degrees $-1$ and $0$ with the differential is $x\mapsto x-\Ad_g(x)$. We have to prove exactness of the sequence
\[\mathfrak{h}[1]\rightarrow \bT_{[G/G]}|_g\rightarrow \mathfrak{h}^*.\]

The fiber of the second map is the complex
\[\mathfrak{g}\rightarrow \mathfrak{g}\rightarrow \mathfrak{h}^*,\]
in degrees $[-1, 1]$ with the first differential as before and the second differential given by the composite $\mathfrak{g}\cong\mathfrak{g}^*\twoheadrightarrow \mathfrak{h}^*$. To show that the sequence written before is exact, we just have to prove that the cohomology of this complex is $\mathfrak{h}$ concentrated in degree $-1$. Indeed, $H^1 = 0$ as $\mathfrak{g}^*\rightarrow \mathfrak{h}^*$ is surjective. 

The closed elements in degree 0 are elements in $\mathfrak{h}^\perp\subset \mathfrak{g}$. Since \[\mathfrak{h}\cong \Ker(1-\Ad_{g^{-1}}),\] we have \[\mathfrak{h}^\perp\cong \im(1-\Ad_g).\] Therefore, every closed element in degree 0 is exact. Finally, closed elements in degree $-1$ are exactly elements in $\mathfrak{h}\subset \mathfrak{g}$. This proves the exactness of the sequence.

\subsection{Fusion}

Let us compute the Lagrangian structure on the correspondence
\[
\xymatrix{
& [(G\times G)/G] \ar[dl] \ar[dr] &\\
[G/G] & & [G/G]\times [G/G]
}
\]
obtained in the Chern-Simons theory by considering the pair of pants $M$ as a cobordism from $S^1\sqcup S^1$ to $S^1$.

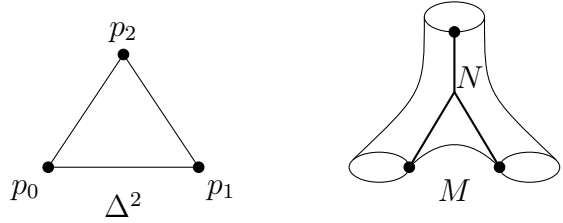
\begin{figure}[h]
\centering
\begin{tikzpicture}
  \draw (0, 0) -- (1, 1.5);
  \draw (1, 1.5) -- (2, 0);
  \draw (0, 0) -- (2, 0);
  \filldraw (0, 0) circle (2pt);
  \filldraw (1, 1.5) circle (2pt);
  \filldraw (2, 0) circle (2pt);
  \draw (1, -0.5) node {$\Delta^2$};
  \draw (-0.3, -0.3) node {$p_0$};
  \draw (2.3, -0.3) node {$p_1$};
  \draw (1, 1.8) node {$p_2$};

  \draw (4.05,0.1) .. controls (5, 1) .. (5, 2);
  \draw (4.4,0) ellipse (0.4 and 0.2);
  \draw (5.4,2) ellipse (0.4 and 0.2);
  \draw (5.8, 2) .. controls (5.8, 1) .. (6.75, 0.1);
  \draw (6.4, 0) ellipse (0.4 and 0.2);
  \draw (4.8, 0) .. controls (5.2, 0.4) and (5.6, 0.4) .. (6.0, 0);
  \draw [thick] (4.8, 0) -- (5.4, 1);
  \draw [thick] (6, 0) -- (5.4, 1);
  \draw [thick] (5.4, 1) -- (5.4, 1.8);
  \filldraw (4.8, 0) circle (2pt);
  \filldraw (6, 0) circle (2pt);
  \filldraw (5.4, 1.8) circle (2pt);
  \draw (5.4, -0.3) node {$M$};
  \draw (5.6, 1.2) node {$N$};
\end{tikzpicture}
\caption{A 2-simplex and a pair of pants}
\end{figure}

First, we have to find a convenient simplicial set representing $M$. Observe that we can contract a subset $N$ drawn in the picture to obtain a topological space homeomorphic to a 2-simplex modulo its vertices. In other words, \[\Delta^2/(p_0\sqcup p_1\sqcup p_2)\cong M/N.\]

Therefore, we can construct $M$ as a simplicial set with one nondegenerate 0-simplex, three nondegenerate 1-simplices and one nondegenerate 2-simplex. Its simplices are
\[M_0 = \{q\},\quad M_1 = \{s_0q, \tau_0, \tau_1, \tau_2\}, \quad M_2 = \{s_0^2q, s_1 \tau_0, s_0\tau_0, s_1\tau_1, s_0\tau_1, s_1\tau_2, s_0\tau_2, M\}, ...\]

The boundary maps are $d_0(M) = \tau_0$, $d_1(M) = \tau_1$ and $d_2(M) = \tau_2$.

Let's construct an isomorphism $\Map(M_\bullet, \B_\bullet G)\cong (G\times G)/G$. Indeed, $\Hom(M_\bullet, \B_\bullet G)\cong G\times G$ as any map $f\colon M_\bullet\rightarrow \B_\bullet G$ is uniquely determined by its values $g_1 = f(\tau_2)$ and $g_2 = f(\tau_0)$. Then, for instance, $f(\tau_1) = g_1g_2$ and $f(M) = (g_1, g_2)$.

One can check that $\Hom(M_\bullet\times \Delta^1_\bullet, \B_\bullet G)\cong G\times G\times G$. The map is
\[f\mapsto (f(\{\tau_2\}\times \{s_0p_0\}), f(\{\tau_0\}\times \{s_0p_0\}), f(\{s_0q\}\times \{I\})).\]

From these considerations we can easily compute the evaluation map
\[\ev\colon(G\times G)/G\times M_\bullet\rightarrow \B_\bullet G.\]

For instance,
\[\ev_2\colon G\times G\times G\times G\times \{M\}\rightarrow G\times G\]
is
\[\ev_2(g_1, g_2, h_1, h_2) = (g_1h_1, h_1^{-1}g_2h_1h_2).\]

Using the Eilenberg--MacLane map the transgression is
\[p_*\ev^* \phi = s_0^*s_1^* \phi = \phi.\]

\begin{thm}
$(m, p_1, p_2)\colon [G/G]\leftarrow [(G\times G)/G]\rightarrow [G/G]\times [G/G]$ is a Lagrangian correspondence with the isotropic structure $\phi\in\Omega^{2, cl}(G\times G)$.
\end{thm}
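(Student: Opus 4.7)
The plan is to combine Calaque's transgression theorem (\autoref{thm:CalAKSZ}) with the explicit simplicial computation just carried out in this section. Applied to the pair of pants $M$ viewed as an oriented cobordism from $S^1\sqcup S^1$ to $S^1$, Calaque's theorem produces a canonical Lagrangian structure on the restriction map \[\Map_{\dSt}(M_\B,\B G)\to\Map_{\dSt}((\partial M)_\B,\B G).\] Under the simplicial identifications already established, the source is $[(G\times G)/G]$, the target is $[G/G]^{\times 3}$ carrying the 1-shifted symplectic form $(-\omega)\oplus\omega\oplus\omega$ (the sign on the outgoing $S^1$ coming from orientation reversal), and the restriction map is precisely $(m,p_1,p_2)$. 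So the existence of a Lagrangian structure is automatic; the content of the theorem is that its underlying isotropic 2-form is identified with $\phi=-\tfrac{1}{2}(p_1^*\theta,p_2^*\overline{\theta})$.

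This identification follows from the transgression computation performed just above: using the minimal simplicial model of $M$ with one nondegenerate 2-simplex and the evaluation map $\ev_2(g_1,g_2,h_1,h_2)=(g_1h_1,\,h_1^{-1}g_2h_1h_2)$, the Eilenberg--MacLane shuffle formula yields $p_*\ev^*\phi=s_0^*s_1^*\phi=\phi$. One must additionally verify that the $u\omega_1$-component of the 2-shifted symplectic form on $\B G$ contributes trivially to the isotropic structure at the level of 2-forms on $[(G\times G)/G]$; this holds for dimension reasons since $\omega_1$ is a 3-form while the 2-simplex of $M$ can only accommodate lower-degree contributions under transgression. As an independent sanity check, one can verify the isotropic equation $(d+u\ddr)\phi = m^*\omega - p_1^*\omega - p_2^*\omega$ directly: the $u^1$-coefficient reproduces the multiplicativity of $\omega_1$ with respect to $-\phi$ proved in \autoref{lemma:chimult}, while the $u^0$-coefficient becomes, under the \v{C}ech-to-quotient-stack comparison from Section 5.1, the associativity equation for $\phi$ from Section 3.3.

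The main obstacle is bookkeeping of signs and orientations: matching the topological pair of pants to its simplicial model, ensuring the outgoing circle contributes with the opposite symplectic structure, and confirming that no spurious degenerate-simplex contributions appear in the shuffle-map computation. Once these conventions are pinned down, the Lagrangian nondegeneracy follows either automatically from \autoref{thm:CalAKSZ} or, directly, from the tangent-complex exact-sequence criterion of Section 1 applied using nondegeneracy of the invariant bilinear form on $\mathfrak{g}$.
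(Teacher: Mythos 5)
Your proposal matches the paper's argument: the Lagrangian structure on $[(G\times G)/G]\to[G/G]\times\overline{[G/G]\times[G/G]}$ is supplied by Calaque's transgression theorem applied to the pair of pants, and the underlying isotropic $2$-form is identified via the minimal simplicial model of $M$ (one nondegenerate $2$-simplex), the evaluation map $\ev_2(g_1,g_2,h_1,h_2)=(g_1h_1,\,h_1^{-1}g_2h_1h_2)$, and the Eilenberg--MacLane formula giving $p_*\ev^*\phi=s_0^*s_1^*\phi=\phi$. Your supplementary checks (vanishing of the $u$-component for degree reasons and the direct verification of the isotropic equation via \autoref{lemma:chimult} and the associativity of $\phi$) are consistent with, though not spelled out in, the paper.
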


Suppose $\mu=(\mu_1,\mu_2)\colon M\rightarrow G\times G$ is a $G\times G$-equivariant map. Then $\tilde{\mu}\colon M\rightarrow G\times G\stackrel{m}\rightarrow G$ is $G$-equivariant for the diagonal action of $G$.

If $[M/(G\times G)]\rightarrow [G/G]\times [G/G]$ is Lagrangian, we have a section $\omega\in \mH^0(M, \mu^*(\cT\boxtimes \cT))$. Using the multiplicative structure on $\cT$ we get a section $\tilde{\omega}\in \mH^0(M, \mu^*m^*\cT)$; in fact, since $\cT$ is trivial, we can write it as
\[\tilde{\omega} = \omega - \mu^*\phi = \omega + \frac{1}{2}(\mu_1^*\theta, \mu_2^*\overline{\theta}).\]

We see that $[M/G]$ with the moment map coming from the product $\mu_1\mu_2$ is the internal fusion of $[M/(G\times G)]$.

\subsection{Punctured torus}

\subsubsection{}
If
\[1\rightarrow H\rightarrow\tilde{G}\rightarrow G\rightarrow 1\]
is a central extension of $G$ by $H$, we can canonically lift commutators $aba^{-1}b^{-1}$ to the central extension: pick any lifts $\tilde{a},\tilde{b}$ of $a$ and $b$. Then $\tilde{a}\tilde{b}\tilde{a}^{-1}\tilde{b}^{-1}$ is a lift of $aba^{-1}b^{-1}$. It is easy to see that the lift of the commutator does not depend on the individual lifts.

One can formulate the same result in the language of multiplicative torsors. Consider $G\times G$ with the moment map $\mu\colon G\times G\rightarrow G$ given by the commutator. Let us use the notation $f(g)=(g, g^{-1})$ and $\overline{f}(a, b)=(a,b,a^{-1},b^{-1})$. Then the canonical section of $\mu^*\cT$ over $G\times G$ obtained as $(\tilde{a}\tilde{b})(\tilde{a}^{-1}\tilde{b}^{-1})$ is
\[h_0 = -\overline{f}^*m_{12}^*m_{23}^*\phi - \overline{f}^*m_{12}^*p_{23}^*\phi - \overline{f}^*p_{12}^*\phi + p_1^*f^*\phi + p_2^*f^*\phi.\]

For our choice of the multiplicative structure $f^*\phi = 0$. So, we get
\[h_0 = \frac{1}{2}((ab)^*\theta, (a^{-1}b^{-1})^*\overline{\theta}) + \frac{1}{2}((a^{-1})^*\theta, (b^{-1})^*\overline{\theta}) + \frac{1}{2}(a^*\theta, b^*\overline{\theta}).\]

We can simplify it further using $(a^{-1})^*\theta = -a^*\overline{\theta}$; we obtain
\begin{equation}
h_0 = \frac{1}{2}(a^*\theta, b^*\overline{\theta}) + \frac{1}{2}(a^*\overline{\theta}, b^*\theta) + \frac{1}{2}((ab)^*\theta, (a^{-1}b^{-1})^*\overline{\theta}).
\label{doublelagr}
\end{equation}

\subsubsection{}

Let us now compute the Lagrangian structure $h_0$ on the character stack of the punctured torus in the AKSZ formalism.

First, we can represent the double $D(G)$ as a capped pair of pants:

\begin{figure}[h]
\centering
\begin{tikzpicture}
  \draw (0,-1.2) arc (-90:90:1.2);
  \draw (0,0.9) ellipse (0.15 and 0.3);
  \draw (0, -0.9) ellipse (0.15 and 0.3);
  \draw (0,-0.6) arc (-90:90:0.6);
  
  \draw (2,0) node {=};

  \draw (3, 0.9) ellipse (0.15 and 0.3);
  \draw (3, -0.9) ellipse (0.15 and 0.3);
  \draw (3, 1.2) .. controls (3.9, 0.3) .. (4.5, 0.3);
  \draw (3, -1.2) .. controls (3.9, -0.3) .. (4.5, -0.3);
  \draw (4.5, 0) ellipse (0.15 and 0.3);
  \draw (3, 0.6) .. controls (3.25, 0) .. (3, -0.6);
  \draw (4.5, -0.3) arc (-90:90:0.3);
\end{tikzpicture}
\end{figure}

So, we can compute the Lagrangian structure on $D(G)$ by representing it as
\[[G/G]\cong [(G\times G)/G] \times_{[G/G]} [\pt/G].\]

Let $f\colon G\rightarrow G\times G$ be $g\mapsto (g, g^{-1})$, then the $G$-equivariant form on $G$ that equips $[G/G]$ with a Lagrangian structure is
\[f^*\phi = -f^*\frac{1}{2}(p_1^*\theta, p_2^*\overline{\theta}) = \frac{1}{2}(\theta, \theta) = 0.\]

This is not surprising since the double $D(G)$ comes from the cylinder representing the diagonal Lagrangian $[G/G]\rightarrow [G/G]\times [G/G]$, where the Lagrangian structure is trivial.

To compute the Lagrangian structure on the character stack of the punctured torus $[(G\times G)/G]$, let us represent it as a fusion of the double $D(G)$:

\begin{figure}[h]
\centering
\begin{tikzpicture}
  \begin{scope}[decoration={
    markings,
    mark=at position 0.5 with {\pgftransformscale{2}\arrow{>}}}
    ] 
    \draw[postaction={decorate}] (0,-1)--(2,-1);
    \draw[postaction={decorate}] (2,-1)--(2,1);
    \draw[postaction={decorate}] (0,1)--(2,1);
    \draw[postaction={decorate}] (0,-1)--(0,1);
  \end{scope}
  \draw (1,0) circle (0.3);
  \draw (-0.3,0) node {$a$};
  \draw (2.3,0) node {$a$};
  \draw (1, -1.3) node {$b$};
  \draw (1, 1.3) node {$b$};

  \draw (3, 0) node {=};

  \draw (5, 1) arc (90:270:1);
  \draw (5, 0.75) ellipse (0.125 and 0.25);
  \draw (5, -0.75) ellipse (0.125 and 0.25);
  \draw (5, 0.5) arc (90:270:0.5);
  \draw (4.8, 1.3) node {$ab$};
  \draw (4.6, -1.3) node {$b^{-1}a^{-1}$};

  \draw (5.5, 0.75) ellipse (0.125 and 0.25);
  \draw (5.5, -0.75) ellipse (0.125 and 0.25);
  \draw (5.5, 1) .. controls (6.25, 0.25) .. (7, 0.25);
  \draw (5.5, -1) .. controls (6.25, -0.25) .. (7, -0.25);
  \draw (7, 0) ellipse (0.125 and 0.25);
  \draw (5.5, 0.5) .. controls (5.75, 0) .. (5.5, -0.5);
  \draw (5.6, 1.3) node {$ab$};
  \draw (6, -1.3) node {$a^{-1}b^{-1}$};
  \draw (8.1, 0.1) node {$aba^{-1}b^{-1}$};
\end{tikzpicture}
\end{figure}

This gives a pullback diagram
\[
\xymatrix{
[(G\times G)/G] \ar^{f_1}[r] \ar^{g_1}[d] & [(G\times G)/G] \ar^{g_2}[d] \\
[G/G] \ar^{f_2}[r] & [G/G]\times [G/G],
}
\]
where the maps are
\begin{align*}
f_1(a, b) &= (ab, a^{-1}b^{-1}) \\
g_1(a, b) &= a \\
f_2(a) &= (a, a^{-1}) \\
g_2(a, b) &= (a, b).
\end{align*}

Note, that the diagram has a nontrivial homotopy commutativity data \[h\colon [(G\times G)/G]\times \Delta^1\rightarrow [G/G]\times [G/G]\] given by the path $(ab, b^{-1}a^{-1})\sim (ab, a^{-1}b^{-1})$. On the level of differential forms, $h$ induces a homotopy $\overline{h}\colon  g_1^*f_2^*\Rightarrow f_1^*g_2^*$, i.e. we have
\[\d\overline{h} + \overline{h}\d = g_1^*f_2^* - f_1^* g_2^*.\]

Consider the chain complex $g_1^*\Omega^2_{[G/G]} \oplus f_1^*\Omega^2_{[(G\times G)/G]} \oplus f_1^*g_2^*\Omega^2_{[G/G]\times [G/G]}[1]$ with the differential \[f_1^*g_2^*\Omega^2_{[G/G]\times [G/G]}[1]\rightarrow g_1^*\Omega^2_{[G/G]} \oplus f_1^*\Omega^2_{[(G\times G)/G]}\]
given by
\[\gamma\mapsto (f_2^*\gamma, -g_2^*\gamma).\]

The Lagrangian structure on $[(G\times G)/G]$ is given by the image of $(0, -\phi, p_1^*\omega_0 + p_2^*\omega_0)$ under the map
\[g_1^*\Omega^2_{[G/G]} \oplus f_1^*\Omega^2_{[(G\times G)/G]} \oplus f_1^*g_2^*\Omega^2_{[G/G]\times [G/G]}[1]\rightarrow \Omega^2_{[(G\times G)/G]}\]
given by
\[(\alpha, \beta, \gamma)\mapsto g_1^*\alpha + f_1^*\beta - \overline{h}\gamma.\]

To compute $\overline{h}\colon \Omega^2(G/G\times G/G,1)\rightarrow\Omega^2((G\times G)/G, 0)$, we will use the \v{C}ech presentation of differential forms on $G/G\times G/G$:
\[
\xymatrix{
... \ar@<1ex>[r] \ar[r] \ar@<-1ex>[r] & G\times G\times G \ar@<0.5ex>^-{a}[r] \ar@<-0.5ex>_-{p}[r] & G\times G \ar^{h}[dl] \\
... \ar@<1ex>[r] \ar[r] \ar@<-1ex>[r] & G\times G\times G\times G \ar@<0.5ex>^-{a}[r] \ar@<-0.5ex>_-{p}[r] & G\times G
}
\]

The groupoid maps for $(G\times G)/G$ are
\begin{align*}
a(g, a, b) &= (g^{-1}ag, g^{-1}bg) \\
p(g, a, b) &= (a, b).
\end{align*}

The groupoid maps for $G/G\times G/G$ are
\begin{align*}
a(g_1, g_2, a, b) &= (g_1^{-1}ag_1, g_2^{-1}bg_2) \\
p(g_1, g_2, a, b) &= (a, b).
\end{align*}

The homotopy $h$ is given by
\[h(a, b) = (e, b^{-1}, ab, b^{-1}a^{-1}).\]

The Lagrangian structure on the double $D(G)$ is thus given by the two-form
\[h_0=\frac{1}{2}f_1^*(p_1^*\theta, p_2^*\overline{\theta}) + \frac{1}{2}\overline{h}(p_3^*\theta + p_3^*\overline{\theta} + \Ad_ap_1^*\overline{\theta}, p_1^*\overline{\theta}) + \frac{1}{2}\overline{h}(p_4^*\theta + p_4^*\overline{\theta} - \Ad_bp_2^*\overline{\theta}, p_2^*\overline{\theta}).\]

The second summand is zero since $\overline{h}(p_1^*\overline{\theta}) = 0$. We get
\begin{align*}
h_0 &= \frac{1}{2}((ab)^*\theta, (a^{-1}b^{-1})^*\overline{\theta}) + \frac{1}{2}((ab)^*\theta + (ab)^*\overline{\theta} - \Ad_{b^{-1}a^{-1}} b^*\theta, b^*\theta) \\
&= \frac{1}{2}((ab)^*\theta, (a^{-1}b^{-1})^*\overline{\theta}) + \frac{1}{2}(\Ad_b a^*\theta + b^*\theta + a^*\overline{\theta} + \Ad_{a^{-1}} b^*\overline{\theta} - \Ad_{a^{-1}}b^*\overline{\theta}, b^*\theta) \\
&= \frac{1}{2}((ab)^*\theta, (a^{-1}b^{-1})^*\overline{\theta}) + \frac{1}{2}(a^*\theta, b^*\overline{\theta}) + \frac{1}{2}(a^*\overline{\theta}, b^*\theta),
\end{align*}
which coincides with the previously obtained form $h_0$ \eqref{doublelagr}.

\section{Prequantization}
\label{sect:prequantization}

\subsection{General definition}
\subsubsection{}

Classically, a prequantization of a symplectic manifold $(X,\omega)$ consists of lifting the symplectic form $\omega\in \mH^0(X, \Omega^{2, cl})$ to a line bundle with a connection $L\in \mH^1(X, \cO^\times\rightarrow \Omega^1)$ whose curvature is $\omega$. That is, a prequantization is a lift
\[
\xymatrix{
& X \ar^{\omega}[d] \ar@{-->}_{L}[dl] \\
(\cO^\times\rightarrow\Omega^1) \ar^-{\ddr}[r] & \Omega^{2,cl}
}
\]

\subsubsection{Example}

We will be interested in constructing prequantizations of character stacks, so consider the simplest case of the $\GL_1$ character stack of a torus $\Loc_{\GL_1}(T)$. Removing a disk from the torus and gluing it back in, we obtain a presentation
\begin{align*}
\Loc_{\GL_1}(T) &\cong [(\GL_1\times \GL_1)/\GL_1] \times_{[\GL_1/\GL_1]} [\pt/\GL_1] \\
&\cong \left[((\GL_1\times \GL_1)\times_{\GL_1} \pt)/\GL_1\right]\\
&\cong (\GL_1\times \GL_1) \times (\Omega \GL_1\times \B \GL_1).
\end{align*}

In other words, the character stack $\Loc_{\GL_1}(T)$ is isomorphic to a product of the character \textit{variety} of the torus $\GL_1\times \GL_1$ and the character stack of the sphere \[\Loc_{\GL_1}(S^2) \cong \Omega \GL_1\times \B \GL_1.\] Moreover, the symplectic structure is simply the product symplectic structure.

The symplectic structure on the character variety $\GL_1\times \GL_1$ can be read off from the formula \eqref{doublelagr}. If we denote the coordinates on $\GL_1\times \GL_1$ by $(a,b)$, the symplectic structure is
\[\omega = \ddr\log a\wedge \ddr\log b.\]

Every line bundle on $\GL_1\times \GL_1$ is trivializable, so the curvature of a line bundle with a connection is necessarily exact. But $\omega$ is not exact, so it cannot be prequantized. Alternatively, one can observe that the weight of $\omega$ in the mixed Hodge structure on the character variety is 4, while Chern classes of line bundles have weight 2.

This should be contrasted with the analytic case, where the character variety $\GL_1\times \GL_1$ is isomorphic to the moduli space of holomorphic line bundles with a connection $\Pic^{\flat}(T)$ as a complex manifold once we choose a complex structure on $T$. The space $\Pic^{\flat}(T)$, a twisted cotangent bundle to $\Pic(T)$, admits a prequantization, but the prequantum line bundle is not algebraic when pulled back to $\GL_1\times \GL_1$.

\subsubsection{}

Therefore, we will consider a more general notion of prequantization applicable in the algebraic situation.

An immediate generalization of the notion of prequantization is to a sheaf of complexes $F$ together with a chain map $F\rightarrow \Omega^{2,cl}$. One can also consider a sheaf of infinite loop spaces $F$ together with an $\bE_\infty$-map $F\rightarrow |\Omega^{2, cl}|$.

\begin{defn}
A \textit{prequantization} of an $n$-shifted symplectic stack $(X,\omega)$ is a lift of the symplectic form $\omega\in \Omega^{2, cl}(X, n)$ to a map $\tilde{\omega}\colon X\rightarrow \Omega^n F$.
\end{defn}

Note that we denote by $\Omega$ both the based loop space and the complex of differential forms; we hope the notation will be clear from the context.

\subsection{Prequantization of character stacks}

\subsubsection{Topological field theories}

Let $\Corr$ be the $(\infty, 2)$-category of correspondences of derived stacks with 2-morphisms being correspondences between correspondences. This $(\infty, 2)$-category is defined in \cite{Ha} in the setting of complete $n$-fold Segal spaces where it is denoted by $\mathrm{Span}_2(\dSt)$. Note that every derived stack is fully dualizable with the dual given by the same derived stack. We denote by $\Corr^{\sim}$ the underlying $\infty$-groupoid of invertible morphisms.

Let $\Bordfr_2$ be the $(\infty,2)$-category of framed cobordisms (see \cite{Lu}). We have a functor
\[\Fun(\Bordfr_2, \Corr)\rightarrow \Corr^{\sim}\]
sending a functor $Z\colon \Bordfr_2\rightarrow \Corr$ to its value $Z(\pt)$ on the point. The cobordism hypothesis \cite[Theorem 2.4.6]{Lu} states that it is an equivalence. The inverse functor
\[\Corr^\sim\rightarrow \Fun(\Bordfr_2, \Corr)\] is given by sending \[X\mapsto (M\mapsto \Map_{\dSt}(M_{\B}, X)).\]

We can also consider stacks with closed 2-forms. Let $\CorrO$ be the $(\infty,2)$-category of correspondences of derived stacks equipped with a closed degree $n$ two-form (not necessarily nondegenerate). Any such derived stack is fully dualizable with the dual given by the same derived stack with the opposite two-form. The functor
\[\Fun(\Bordfr_2, \CorrO)\rightarrow \CorrO^{\sim}\]
is again an equivalence. The inverse is given on closed framed $d$-manifolds $M$ by \[Z(M)=\Map(M_\B, X)\] with the two-form given as in \autoref{thm:PTVVAKSZ}.

A conjecture of Lurie and Haugseng \cite[Conjecture 1.4]{Ha} states that the canonical $SO(n)$-action on $\CorrO$ is trivializable. This would imply that given a derived stack $X$ with a closed degree $n$ two-form $\omega$ there is a functor $Z_X\colon \Bordor_2\rightarrow \CorrO$ on the category of \textit{oriented} cobordisms which is $Z_X(M)\cong\Map(M_\B, X)$ forgetting the two-form.

\subsubsection{Algebraic $K$-theory}

Given $R\in\cdga^{\leq 0}$, we have the associated $\K$-theory space of $R$. We denote by $\K$ the sheafification of this space in the \'{e}tale topology.

There is a Chern character map from algebraic $K$-theory to negative cyclic homology whose components we denote by \[\ch_n\colon \K\rightarrow |\Omega^{n, cl}[n]|.\]

Let $\CorrK$ be $(\infty, 2)$-category of correspondences of derived stacks equipped with a map to $\K$. The post-composition with the second Chern character $\ch_2$ gives a functor $\CorrK\rightarrow \CorrOO$.

We have a canonical map $\omega_K\colon \B \GL_n\rightarrow \K$ given by sending a vector bundle to the associated point in the $\K$-theory space. There is a 2-shifted symplectic structure $\omega_{\B \GL_n}$ on $\B \GL_n$ given by the trace pairing. It can be factored as
\[\B \GL_n\stackrel{\omega_K}\rightarrow \K\stackrel{\ch_2}\rightarrow |\Omega^{2, cl}[2]|.\]

In other words, the symplectic structure $\omega_{\B \GL_n}$ can be prequantized to an element $\omega_K$. By the cobordism hypothesis, the restriction to the point
\[\Fun(\Bordfr_2, \CorrK)\rightarrow \CorrK^{\sim}\]
is an equivalence, so $(\B \GL_n, \omega_K)$ defines a functor \[Z_K\colon \Bordfr_2\rightarrow \CorrK,\]
which can be identified as $Z_K(M)\cong \Loc_{\GL_n}(M)$ on the level of underlying derived stacks.

Let $M$ be a closed framed surface (which is necessarily a 2-torus). Then $Z_K(M)\cong \Loc_{\GL_n}(M)$ with a map to $\Omega^2\K$. Its composition with the second Chern character $\ch_2$ gives the symplectic form on $\Loc_{\GL_n}(M)$, thus $Z_K(M)$ is a $\K$-theoretic prequantization of the character stack.

Assuming the Lurie--Haugseng conjecture, we can actually extend this to a $\K$-theoretic prequantization of the character stack $\Loc_{\GL_n}(M)$ of any oriented surface as well. Thus, we obtain a map
\[\Loc_{\GL_n}(M)\rightarrow \Omega^2\K\]
which prequantizes the symplectic structure. We should note, however, that it is not a class in $\K_2(\Loc_{\GL_n}(M))$, the second algebraic $\K$-theory of the character stack, since we took \'{e}tale sheafification of the $\K$-theory space. A related construction by Fock and Goncharov \cite{FG} produces a class in $\Gamma(\Loc_{\GL_n}(M), \cK_2)$, where $\cK_2$ is the Zariski sheafification of the presehaf $\K_2$.

\subsubsection{Beilinson regulator}

The $\K$-theoretic prequantization of character stacks we have constructed, although fairly natural, is not very geometric. Let us show how one can construct a prequantum line bundle on the \textit{complex-analytic} character stack.

Let us remind the basics of Deligne cohomology. Given a complex manifold $X$, we define the complexes of sheaves $\Z_\D(i)$ to be
\[\Z_\D(i) = (\Z\rightarrow \cO\rightarrow \Omega^1\rightarrow ...\rightarrow \Omega^{i-1}).\]

The Deligne cohomology groups $\mH^n(X, \Z_D(i))$ are the hypercohomology groups of these complexes. One can easily see that $\mH^2(X, \Z_D(1))$ parametrizes line bundles on $X$ and $\mH^2(X, \Z_\D(2))$ parametrizes line bundles with connection on $X$.

We have the morphisms $\mH^n(X, \Z_D(i))\rightarrow \Omega^{i, cl}(X, n-i)$ given by the de Rham differential. For instance, the map $\mH^2(X, \Z_D(1))\rightarrow \Omega^{1, cl}(X, 1)$ is the first Chern class of the line bundle and $\mH^2(X, \Z_\D(2))\rightarrow \Omega^{2, cl}(X, 0)$ is the curvature map.

Beilinson \cite{Be} has realized that the Chern character map $\ch\colon \K_i(X)\rightarrow \oplus_n \Omega^{n, cl}(X, n-i)$ can be factored as
\[\K_i(X)\stackrel{\reg}\rightarrow \bigoplus_n \mH^{2n-i}(X, \Z_\D(n))\stackrel{\ddr}\rightarrow \bigoplus_n \Omega^{n, cl}(X, n-i),\]
where the maps $\reg_n\colon \K_i(X)\rightarrow \mH^{2n-i}(X, \Z_\D(n))$ are given by the Beilinson regulator.

Let us now assume that one has defined a category of derived complex-analytic stacks $\dASt_\C$ together with the sheaves $\Omega^{n, cl}$ and $\Z_\D(n)$. Moreover, suppose that we have a universal second Chern character in the Deligne cohomology
\[\B \GL_n^{an}\stackrel{\ch_2^\D}\rightarrow |\Z_\D(2)[4]|\]
such that the composite
\[\B \GL_n^{an}\stackrel{\ch_2^\D}\rightarrow |\Z_\D(2)[4]|\stackrel{\ddr}\rightarrow |\Omega^{2, cl}[2]|\]
is the usual symplectic structure on $\B \GL_n^{an}$.

Repeating the construction of the $\K$-theoretic prequantization, we obtain a class
\[\omega_\D\in \mH^2(\Loc_{\GL_n}(M)^{an}, \Z_D(2)),\]
i.e. a holomorphic line bundle with a connection on the analytic character stack, whose curvature coincides with the canonical symplectic form. In other words, we have constructed a prequantization of the analytic character stack starting from a prequantization of the classifying stack $\B \GL_n$.

\end{document}